\documentclass[12pt,a4]{article}
\newtheorem{theorem}{Theorem}
\newtheorem{lemma}{Lemma}
\newtheorem{corollary}{Corollary}
\newtheorem{definition}{Definition}
\newtheorem{example}{Example}
\newtheorem{example*}{Example*}
\newtheorem{proposition}{Proposition}
\newtheorem{remark}{Remark}
\newenvironment{proof}[1][Proof]{\textbf{#1.} }{\ \rule{0.5em}{0.5em}}

\usepackage[square]{natbib}
\usepackage{amssymb} 
\usepackage{phonetic} 
\usepackage{amsmath}
\usepackage{algorithm}
\usepackage{algpseudocode}
\bibliographystyle{plainnat}

%

  \setlength\topmargin{1.25cm}
  \setlength\headheight{1.25cm}
  \setlength\headsep{0.3cm}
  \voffset=-1in
  \setlength\textheight{23cm}

  \setlength{\parskip}{1.mm}

\begin{document}
\title{Formal Solutions of a Class of Pfaffian Systems in Two Variables\footnote{Submitted to the \textit{International Symposium on Symbolic and Algebraic Computation}, Kobe, Japan, 2014.}}

\author {\textbf{Moulay Barkatou, Suzy S. Maddah} \footnote{Enrolled  under a joint PhD program with the Lebanese University}\\
     XLIM UMR 7252 ; DMI\\
          University of Limoges; CNRS\\
       123, Avenue Albert Thomas\\
       87060 Limoges, France\\
       moulay.barkatou@unilim.fr\\
       suzy.maddah@etu.unilim.fr\\[10pt]
         \textbf{Hassan Abbas}\\
       Laboratory of Mathematics\\
       Lebanese University\\
       Beirut, Lebanon\\
       habbas@ul.edu.lb\\
}

\maketitle
\begin{abstract}
In this article, we present an algorithm which computes a fundamental matrix of formal solutions of completely integrable Pfaffian systems with normal crossings in two variables, based on \citep{key24}. A first step was set in \citep{key5} where the problem of rank reduction was tackled via the approach in \citep{key57}. We give instead a Moser-based approach following \citep{key102,key41}. And, as a complementary step, we associate to our problem a system of ordinary linear singular differential equations from which the formal invariants can be efficiently derived via the package ISOLDE \citep{key27}, implemented in the computer algebra system Maple. 
\end{abstract}

\textbf{Keywords}: Linear systems of partial differential equations, Pfaffian systems, Formal solutions, Moser-based reduction, Hukuhara-Turritin's normal form
\newpage
\section{Introduction}
\label{introduction}
Let $\mathcal{O}= \mathbb{C}[[x_1, x_2, \dots x_m]]$ be the ring of formal power series in $x=(x_1, x_2,\dots, x_m)$ over the field of complex numbers $\mathbb{C}$ and $K$ its field of fractions. Let $V$ be a $K$-vector space of dimension $n$, $A^{(1)} , \dots , A^{(m)}$ be nonzero matrices with entries in $\mathcal{O}$, and $p_1, \dots, p_m$ be nonnegative integers. Set $\delta_i =x_i \frac{\partial}{\partial x_i}$. Then the operator $\Delta_i = \delta_i  - A^{(i)} / x^{p_i}_i, \; i=1, \dots, m, \;$ is a $\delta_i$-differential operator acting on $V$, that is, an additive map from $V$ to itself satisfying the Leibniz condition: 
$$\forall f \in \mathcal{O} ,\; v \in V ,\; \Delta_i(fv) = \delta_i (f) v + f \Delta_i(v) .$$
Let $Y$ be an unknown $n$-dimensional column vector. In this article, we are interested in the formal reduction of the so-called \textit{completely integrable Pfaffian systems with normal crossings}, the class of linear systems of partial differential equations in $m$ variables and dimension $n$, given by
\begin{equation} \label{originalpfaffoperator} \Delta_i Y = 0 ,\; 1 \leq i \leq m . \end{equation}
satisfying the integrability conditions (pairwise commutativity of the operators), 
$$\label{conditionpfaffoperator} \Delta_i \circ \Delta_j =   \Delta_j \circ \Delta_i ,\quad  1 \leq i, j \leq m .$$
Pfaffian systems arise in many application \citep{key74} including the studies of aerospace and celestial mechanics \citep{key31}. By far, the most important for applications are those with normal crossings \citep{key32}. The associated pfaff (1-form) to system \eqref{originalpfaffoperator} is $d= \sum_{i=1}^{m} \frac{A^{(i)}}{x_i^{p_i+1}} dx_i$. Without loss of generality, the singularity of system\eqref{originalpfaffoperator} is placed at the origin. Otherwise, translations in the independent variables can be performed. The singular locus of the system is the union of hyperplanes of coordinates \; $x_1 x_2 \dots x_m =0$. This is what is referred to as \textit{normal crossings}. 

Let $T \in GL(V)$. A change of basis $Y=T Z$ gives rise to 
\begin{equation} \label{equivalentpfaff} \tilde{\Delta}_i Z = 0, \; 1 \leq i \leq m \end{equation}
where $ \tilde{\Delta}_i = T^{-1} {\Delta}_i T = \delta_i - \tilde{A}^{(i)} / x^{\tilde{p}_i}_i$ and
\begin{equation}  \label{gaugepfaff} \frac{\tilde{A}^{(i)} }{ x^{\tilde{p}_i}_i} = T^{-1}  ( \frac{A^{(i)}}{ x^{p_i}_i} T - \delta_i T), \quad 1 \leq i \leq m .\end{equation}
We say that system \eqref{equivalentpfaff} is \textit{equivalent} to system \eqref{originalpfaffoperator}. Without loss of generality, we assume that $A^{(i)}(x_i =0)$ are nonzero matrices; otherwise $p_i$ can be readjusted. The m-tuple $(p_1 , \dots , p_{m})$ of nonnegative integers is called the \textit{Poincar\'{e} rank} of the system. It is said to be minimal, and is called the \textit{true Poincar\'{e} rank}, whenever the $p_i$'s are simultaneously and individually the smallest among all the possible changes of basis $Y= T Z ,\; T \in GL_n(K)$. 

In \citep{key53,key4}, the language of stable modules over the ring of power series is used to establish the following theorem (Theorem 1 in \citep{key53} and Main Theorem in \citep{key4}).

\begin{theorem}
\label{gerardexistence}
Consider the completely integrable system \eqref{originalpfaffoperator}. Under additional ramifications, $x_i = t_i^{\alpha_i}$ where $\alpha_i$ is a positive integer, $1 \leq i \leq m$, there exists $T$ that belongs to $GL_n(\mathbb{C}((t_1, \dots, t_m)))$ such that the change of basis $Y= T Z$ gives the \textit{equivalent} system:
 $$ T^{-1} \Delta_i T = \frac{\partial}{\partial t_i} - \tilde{A}^{(i)}(t_i)$$ 
where $\tilde{A}^{(i)}(t_i)  =  Diag ( \tilde{A}^i_1,  \tilde{A}^i_2, \dots, \tilde{A}^i_h ) ,$ and for all $1 \leq k \;\leq h$ we have
\begin{itemize}
\item $ \tilde{A}^i_k = w_k^i (t^{-1}_i) I_{n_k} + t_i^{-1} N^i_k $;
\item $w_k^i (t_i) = \sum_{j=1}^{\alpha_i p_i +1} \xi_{ijk} t_i^{-j}$ is a polynomial in $t^{-1}_i$ with coefficients in $\mathbb{C}$. In particular, $\bar{w}_k^i (t_i) = \sum_{\textbf{j=2}}^{\alpha_i p_i +1} \xi_{ijk} t_i^{-j}$, for all $1 \leq k \;\leq h$, are called the $x_i$-\textit{exponential parts}; 
\item $N^i_k$ is a square matrix of order $n_k$ with elements in $\mathbb{C}$,  nilpotent in upper triangular form;
\item $n_1 + n_2 + \dots + n_h = n$.
\end{itemize}
\end{theorem}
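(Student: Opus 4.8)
The plan is to reduce the multivariate statement to the classical single-variable Hukuhara-Turrittin-Levelt theorem, and to use complete integrability both to propagate a decomposition obtained from one operator to all the others and to force the final coefficients to depend on a single variable. Writing $B_i = A^{(i)}/x_i^{p_i}$, the integrability conditions $\Delta_i \circ \Delta_j = \Delta_j \circ \Delta_i$ unwind, via the Leibniz rule and the commutativity $\delta_i \delta_j = \delta_j \delta_i$, into the matrix relations
$$\delta_j B_i - \delta_i B_j + [B_i, B_j] = 0, \quad 1 \leq i, j \leq m,$$
which are the algebraic engine of the whole argument.

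First I would freeze $x_2, \dots, x_m$ as parameters and regard $\Delta_1 = \delta_1 - B_1$ as an ordinary meromorphic operator in $x_1$ over the field $\mathbb{C}((x_2, \dots, x_m))((x_1))$. The classical theory then furnishes, after a ramification $x_1 = t_1^{\alpha_1}$, a gauge transformation $T_1$ bringing $\Delta_1$ into a block-diagonal form in which each block is attached to one distinct $x_1$-exponential part $\bar{w}_k^1$ and has the stated shape $w_k^1(t_1^{-1}) I_{n_k} + t_1^{-1} N_k^1$.

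The key step is to check that $T_1$ simultaneously block-diagonalizes the remaining operators. Substituting the normal form of $B_1$ into the integrability relation $\delta_j B_1 - \delta_1 B_j + [B_1, B_j] = 0$ and comparing the most singular orders in $t_1$, the distinct exponential parts $\bar{w}_k^1$ play the role of separating eigenvalues: the commutator term forces the off-diagonal blocks of each $B_j$, in the decomposition induced by the $\bar{w}_k^1$, to vanish. Hence every $\tilde{A}^{(j)}$ inherits the same block sizes $n_1, \dots, n_h$. I would then recurse, applying the same construction to $\Delta_2, \dots, \Delta_m$ on each invariant block (the operators still commute when restricted to a common invariant subspace), refining the decomposition into common blocks on which every $B_i$ carries a single exponential part. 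On such a block a scalar shearing removes $w_k^i(t_i^{-1}) I_{n_k}$, and the residual relation among the remaining regular-singular parts forces the nilpotent residue to be independent of the variables $x_j$, $j \neq i$, yielding the claimed univariate $t_i^{-1} N_k^i$ with $N_k^i$ nilpotent and upper triangular.

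The main obstacle I anticipate is precisely the coupling in the recursion: one must verify that refining the decomposition for the sake of $\Delta_2$ does not reintroduce off-diagonal terms for $\Delta_1$, and that the ramification indices $\alpha_i$ can be chosen coherently so that the \emph{joint} exponential datum $(\bar{w}_k^1, \dots, \bar{w}_k^m)$ indexed by a single $k$ is well defined and stable under all $m$ operators at once. Controlling this simultaneity --- showing that the spectral decompositions associated with the different variables admit a common refinement invariant under the whole family $\{\Delta_i\}$ --- is the technical heart of the argument, and is exactly the bookkeeping that the stable-module formalism of \citep{key53, key4} is designed to handle.
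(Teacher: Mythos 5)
First, a point of reference: the paper does not prove this theorem at all. It is imported verbatim from \citep{key53} and \citep{key4}, where it is established with the machinery of stable modules (lattices) over the power-series ring, and the paper's conclusion notes that even the original bivariate proof in \citep{key3} leans on $\mathcal{O}_x$, $\mathcal{O}_y$ being principal ideal domains to show certain modules are free. So there is no in-paper proof to match your argument against, and your proposal must stand on its own.

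As it stands it has a genuine gap, concentrated in your first two steps. When you freeze $x_2,\dots,x_m$ and apply the classical Hukuhara--Turrittin--Levelt theorem to $\delta_1 - B_1$ over $\mathbb{C}((x_2,\dots,x_m))((x_1))$, two things go wrong. First, the constant field $\mathbb{C}((x_2,\dots,x_m))$ is not algebraically closed, so the univariate theorem only delivers the normal form after a finite extension of that field; there is no a priori reason this extension is realized by ramifications $x_i=t_i^{\alpha_i}$, nor that the coefficients $\xi_{1jk}$ land in $\mathbb{C}$ rather than in $\mathbb{C}((x_2,\dots,x_m))$ or an algebraic extension of it. The constancy of the spectral data is exactly the content of Proposition \ref{eigenvaluespfaff}, which the paper stresses is a consequence of integrability \emph{and} normal crossings and must be secured before, not after, the splitting. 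Second, and more seriously, the gauge transformation produced by the univariate theory lives in $GL_n$ of the iterated Laurent field $\mathbb{C}((x_2,\dots,x_m))((x_1))$, which is strictly larger than the fraction field $\mathbb{C}((t_1,\dots,t_m))$ of the power-series ring demanded by the statement; the paper's own Example \ref{exmnaive} exhibits precisely this failure, where a univariate reduction of the first subsystem introduces a pole in $y$ and destroys the normal crossings. Controlling the transformation so that it stays in $GL_n(\mathbb{C}((t_1,\dots,t_m)))$ is where the stable-module and freeness arguments of \citep{key3,key53,key4} are actually doing the work, and your closing paragraph in effect delegates this point back to those references rather than supplying it. Your propagation step --- integrability forcing each $B_j$ to be block-diagonal in the decomposition induced by the distinct $x_1$-exponential parts, and forcing the residue to be univariate --- is sound in outline, but without the two points above the recursion never gets started in the required category.
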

This theorem guarantees the existence of a change of basis which takes system \eqref{originalpfaffoperator} to Hukuhara-Turritin's normal form from which the construction of a fundamental matrix of formal solutions (see \eqref{solutionpfaff2} for $m=2$) is straightforward.  In particular, if two systems are \textit{equivalent} then they have the same $x_i$-\textit{exponential parts} for all $1 \leq i \leq m$. However, the formal reduction, that is to say the algorithmic procedure computing such a change of basis, is a question of another nature.

We remark that the integrability conditions and the property of normal crossings play a major role in establishing this theorem. In particular, they give rise to the following aspect of system \eqref{originalpfaffoperator} (Proposition 1, page 8, \citep{key4}).
\begin{proposition}
\label{eigenvaluespfaff}
The eigenvalues of $A^{(i)}(x_i=0), \; 1 \leq i \leq m$, lie in the field of constants.
\end{proposition}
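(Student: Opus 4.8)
The plan is to exploit the integrability conditions, which force the matrices to interact in a very rigid way. Consider the commutativity of $\Delta_i$ and $\Delta_j$ for $i \neq j$. Writing out $\Delta_i \circ \Delta_j = \Delta_j \circ \Delta_i$ explicitly in terms of $\delta_i - A^{(i)}/x_i^{p_i}$ and $\delta_j - A^{(j)}/x_j^{p_j}$, I expect the cross terms to yield a relation of the form $\delta_i(A^{(j)}/x_j^{p_j}) - \delta_j(A^{(i)}/x_i^{p_i}) = [A^{(i)}/x_i^{p_i}, A^{(j)}/x_j^{p_j}]$, since the $\delta_i$ commute among themselves and the Leibniz rule produces the derivative terms. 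This is the standard integrability (Frobenius) condition for a Pfaffian system, and it is the engine that drives the whole argument.

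First I would fix an index $i$, say $i=1$ for concreteness, and set $x_1 = 0$ in the integrability relation between $\Delta_1$ and $\Delta_j$ for some $j \neq 1$. The aim is to derive a constraint that ties the coefficient matrix $A^{(1)}(x_1=0)$ to the action of $\delta_j = x_j \partial/\partial x_j$. Evaluating the commutator relation at $x_1 = 0$ should kill the terms carrying positive powers of $x_1$ and leave an equation governing how $A^{(1)}(0, x_2, \dots, x_m)$ evolves as the other variables vary; in particular I would expect to obtain something like $\delta_j A^{(1)}|_{x_1=0} = [A^{(1)}|_{x_1=0}, \, x_1^{p_1} A^{(j)}/x_j^{p_j}]|_{x_1=0}$, i.e. the derivative of $A^{(1)}(0,\dots)$ with respect to $\delta_j$ equals a commutator with $A^{(1)}(0,\dots)$ itself.

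Next I would extract the consequence for the eigenvalues. If $\delta_j A^{(1)}|_{x_1=0}$ is a commutator $[A^{(1)}|_{x_1=0}, B]$, then $A^{(1)}|_{x_1=0}$ undergoes an isospectral deformation as $x_j$ varies: conjugation by a matrix never changes eigenvalues, so the characteristic polynomial of $A^{(1)}(0, x_2, \dots, x_m)$ must be independent of $x_2, \dots, x_m$. Concretely, I would argue that the coefficients of the characteristic polynomial, being the elementary symmetric functions of the eigenvalues, satisfy $\delta_j(\mathrm{tr}\,(A^{(1)}|_{x_1=0})^\ell) = 0$ for all $\ell$ because the trace of a commutator vanishes and, more generally, $\mathrm{tr}((A^{(1)})^{\ell-1}[A^{(1)},B]) = 0$. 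Hence these coefficients are constant in $x_2, \dots, x_m$; since they are also power series in these variables with coefficients in $\mathbb{C}$, they must lie in $\mathbb{C}$, which means the eigenvalues of $A^{(1)}(x_1=0)$ are algebraic over $\mathbb{C}$ and in fact constants.

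The main obstacle I anticipate is bookkeeping the powers of $x_i$ and $x_j$ cleanly when the Poincar\'e ranks $p_i$ differ from zero, so that the evaluation at $x_1 = 0$ is legitimate and does not secretly involve dividing by $x_1$. One must check that, after clearing denominators in the integrability condition, the relevant terms genuinely survive the specialization $x_1 = 0$ and that the commutator structure is preserved rather than being spoiled by the $x_1^{-p_1}$ factors. I would handle this by multiplying the integrability identity through by the appropriate power $x_1^{p_1}$ before setting $x_1 = 0$, and verifying that the normal-crossings hypothesis guarantees the coefficient matrices are holomorphic at the origin so the specialization is well defined. Once that is in place, the isospectrality argument is routine and the conclusion that the eigenvalues lie in the field of constants follows directly.
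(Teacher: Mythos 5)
Your argument is correct. Note that the paper does not actually prove this proposition; it quotes it from (Proposition 1, page 8, \citep{key4}), so there is no in-paper proof to compare against, and your write-up is essentially the standard argument from that reference. The integrability condition does read $\delta_i M_j - \delta_j M_i = [M_i, M_j]$ with $M_i = A^{(i)}x_i^{-p_i}$; multiplying through by $x_1^{p_1}$ and setting $x_1=0$ kills the term $x_1^{p_1+1}\partial_{x_1}M_j$ and leaves the Lax-type equation $\delta_j\bigl(A^{(1)}|_{x_1=0}\bigr) = \bigl[\,M_j|_{x_1=0},\, A^{(1)}|_{x_1=0}\,\bigr]$, from which $\delta_j\,\mathrm{tr}\bigl((A^{(1)}|_{x_1=0})^\ell\bigr)=0$ follows by cyclicity of the trace exactly as you say; a formal power series in $x_2,\dots,x_m$ annihilated by every $\delta_j$ is a constant, and Newton's identities then put the characteristic polynomial's coefficients, hence the eigenvalues, in $\mathbb{C}$. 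The one imprecision is your displayed guess $[A^{(1)}|_{x_1=0},\, x_1^{p_1}A^{(j)}/x_j^{p_j}]|_{x_1=0}$: the factor $x_1^{p_1}$ must multiply the whole identity rather than the second argument of the bracket (as written, the right-hand side would vanish identically whenever $p_1>0$); the correct bracket partner is $A^{(j)}x_j^{-p_j}$ evaluated at $x_1=0$. Since you explicitly flag this bookkeeping as the point to verify and prescribe precisely the right fix, this is a slip of notation rather than a gap.
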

The univariate case, $m=1$, is the well-known case of linear singular system of ordinary differential equations (ODS, in short) which have been studied extensively (see, e.g., \citep{key6,key7} and references therein). Moreover, unlike the case of $m >1$, algorithms to related problems leading to the construction of the formal solutions and computation of the \textit{exponential parts} have been developed by various authors (see, e.g., \citep{key24,key40,key25,key26} and references therein). The package ISOLDE  \citep{key27} written in the computer algebra system Maple is dedicated to the symbolic resolution of ODS and more generally linear functional matrix equations.

The bivariate case, $m=2$, is the interest of this article. Our first contribution is an explicit method to compute the $x_i$-\textit{exponential parts}. Alongside their importance in the asymptotic theory, their computation reduces the ultimate task of formal reduction to constructing a basis of the $\mathbb{C}$-space of regular solutions (see, for $m=1$, \citep{key25}). Rank reduction as well can complement our work into full formal reduction. Rank reduction is the explicit computation of a change of basis which takes system \eqref{originalpfaffoperator} to an \textit{equivalent} system whose rank is the true Poincar\'{e} rank. Two well-known approaches in the uniariate case are those of Levelt \citep{key57} and Moser \citep{key19}. The former is generalized for systems \eqref{originalpfaffoperator} in two variables ($m=2$) in \citep{key5}. The latter, on the other hand, gives a \textit{reduction criterion} upon which the efficient algorithms of \citep{key27} are based. It is generalized for singularly-perturbed linear differential systems in \citep{key102} and examples over the bivariate field  favored its efficiency [Section 4 in \citep{key102}]. Our second contribution is a Moser-based rank reduction algorithm of system \eqref{originalpfaffoperator} in two variables. This establishes its formal reduction upon following the univariate-case descripiton of \citep{key24}. 

We remark that the multivariate particular case of $p_i =0$ for all $1 \leq i \leq m$ (systems with singularity of first kind) is studied in \citep{key73} and references therein. 

Our article is organized as follows: 
In Section \ref{preliminariespfaff} we give the preliminaries and restrict our notations to two variables. In Section \ref{reviewods} we review the formal reduction techniques in \citep{key24} of ODS, including the computation of the \textit{exponential parts}. Thereafter the discussion is restricted to the bivariate case. In Section \ref{formalreduction} we present the building 
blocks of formal reduction reducing the discussion to two elements. The first of which is the computation of the $x_i$-\textit{exponential parts} given in Section \ref{expopolymatrices}. The second is the Moser-based rank reduction algorithm given in Section \ref{moser}. We finally discuss the limitations and point out prospects of further investigations in Section \ref{conclusion}. Our main results are Theorem \ref{exponentialpfaff} and Theorem \ref{moserpfaffian}.

\section{Preliminaries and Notations}
\label{preliminariespfaff}
For the clarity of presentation in the bivariate case, we set $\mathcal{O}=\mathbb{C}[[x,y]]$, $K$ its field of fractions, $\mathcal{O}_x=\mathbb{C}[[x]],$ and $\mathcal{O}_y=\mathbb{C}[[y]]$. The variables $x$ and $y$ will be dropped from the notation whenever ambiguity is not likely to arise. We refer to the identity and zero matrices of prescribed dimensions by $I_\nu$ and $O_{\nu,\mu}$. System \eqref{originalpfaffoperator} can be rewritten as
\begin{equation}
\label{originalpfaff2}
\begin{cases}
x \frac{\partial Y}{\partial x} = A (x,y) Y= x^{-p} (A_0(y) + A_1(y) x+ A_2(y) x^2 + \dots) Y\\ 
y \frac{\partial Y}{\partial y} = B (x,y)  Y=  y^{-q} ({B}_0(x) + {B}_1(x) y + {B}_2(x) y^2 + \dots) Y
\end{cases} 
\end{equation}
where ($A_0(y)$, $B_0(x)$) is called the leading coefficient pair. The Poincar\'{e} rank of system \eqref{originalpfaff2} is thus $(p,q)$ and we refer to $(A_{00}, B_{00}):=(A(0,0), B(0,0))$ as the leading constant pair.
 An easy calculation shows that the the integrability condition is given by 
\begin{equation}
\label{conditionpfaff2}
x \frac{\partial  B}{\partial x} +  B A = y \frac{\partial  A}{\partial y} +  A B .  \end{equation}
It follows from Theorem \ref{gerardexistence} that a fundamental matrix of formal solutions has the following form and properties
\begin{equation} \label{solutionpfaff2} \Phi(x^{1/s_1}, y^{1/s_2} ) \; x^{{\Lambda}_1} \; y^{{\Lambda}_2} \; exp(Q_1(x^{-1/s_1})) \; exp(Q_2(x^{-1/s_2})) \end{equation}
\begin{itemize}
\item $s=(s_1,s_2)$ is two-tuple of positive integers;
\item $\Phi$ is an invertible meromorphic series in $(x^{1/s_1}, y^{1/s_2})$ over $\mathbb{C}$;
\item $Q_1, Q_2$ are diagonal matrices containing polynomials in $x^{-1/s_1}$ and $y^{-1/s_2}$ over $\mathbb{C}$ without contant terms; they are obtained by formally integrating the $x,y$-\textit{exponential parts} respectively;
\item $\Lambda_1$ and $\Lambda_2$ are constant matrices commuting with $Q_1$ and $Q_2$.
\end{itemize}
System \eqref{originalpfaff2} (resp. system \eqref{originalpfaffoperator}) is said to be \textit{regular singular} whenever $Q_1$ and $Q_2$ are null. In this case $s=(1,1)$, and the formal series $\phi$ converges whenever the series of $A$ and $B$ do. Otherwise, system \eqref{originalpfaff2} is \textit{irregular singular} and the entries of $Q_1, Q_2$ determine the main asymptotic behavior of the actual solutions as $x_i \rightarrow 0$ in appropriately small sectorial regions (Proposition 5.2, page 232, and Section 4 of \citep{key1}). It is shown in \citep{key20,key21},  for the multivariate case in geometric and algebraic settings respectively, that the regularity of system \eqref{originalpfaff2} is equivalent to the regularity of the individual subsystems each considered as a system of ordinary differential equations. As a consequence, system \eqref{originalpfaff2} is \textit{regular singular} if and only if its true Poincar\'{e} rank is $(0,0)$. And to test its regularity, algorithms given for $m=1$ (e.g.\citep{key26,key57}) can be applied separately to the individual subsystems. 

We end this section by a further characterization of a change of basis $T$ in the bivariate case, which takes system \eqref{originalpfaff2} to an \textit{equivalent} system in a weak-triangular form, rather than that of Theorem \ref{gerardexistence}. However, we'll see in Theorem \ref{exponentialpfaff} that this form suffices to give an insight into the computation of $x,y$-\textit{exponential parts}. The following Proposition is given as [Proposition3, page 654, \citep{key53}] and the proof is to be omitted here due to the lack of space. 
\begin{proposition}
\label{gerardtransformation}
Consider the completely integrable system \eqref{originalpfaff2}. Under additional ramification, $x = t^{s_1}$ where $s_1$ is a positive integer, there exists $T \in GL_n(\mathbb{C}((t,y)))$, product of transformations of type $diag(t^{k_1}, \dots, t^{k_n})$ (where $k_1, \dots, k_n$ are nonnegative integers) and transformations in $GL_n(\mathbb{C}[[t,y]])$, such that the change of basis $T= Y Z$ gives the following \textit{equivalent} system: 
\begin{equation} \label{transformedpfaff2} \begin{cases}
\frac{\partial Z}{\partial t} = \tilde{A} (t,y) Z  \\
 \frac{\partial Z}{\partial y} = \tilde{B} (y) Z \end{cases} \end{equation}
where $\tilde{A} (t,y)= Diag ( \tilde{A}_1,\tilde{A}_2, \dots, \tilde{A}_h)$ and for all $1 \leq k \leq h $
\begin{itemize}
\item $\tilde{A}_k = a_k (t^{-1}) I_{n_k} + t^{-1} N_k (y)$;
\item $a_k (t)=\sum_{j=1}^{s_1 p +1} \xi_{jk} t^{-j}$ is a polynomial in $t^{-1}$ with coefficients in $\mathbb{C}$; 
\item $N_k$ is a  $n_k$-square matrix with elements in $\mathcal{O}_y$;
\item $y^{q+1} \tilde{B}(y) \in \mathcal{O}_y$ and  $\tilde{B}(y)= Diag ( \tilde{B}_1,\tilde{B}_2, \dots, \tilde{B}_h)$;
\item $y^{q+1} \tilde{B}_k |_{y=0}= c_k I_{n_k} + F_k$ where $c_k \in \mathbb{C}$ and $F_k$ is a nilpotent constant matrix.
\end{itemize}
\end{proposition}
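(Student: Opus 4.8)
The plan is to carry out the formal (Hukuhara--Turrittin) reduction of the $x$-subsystem while treating $y$ as a parameter, and then to let the integrability condition \eqref{conditionpfaff2} dictate the shape of the $y$-subsystem. First I would regard $x\,\partial Y/\partial x = A(x,y)\,Y$ as a singular system of ODEs in the single variable $x$ whose coefficients are Laurent series in $x$ with coefficients in $\mathcal{O}_y$ (or, after inverting, in $\mathbb{C}((y))$). Over such a coefficient domain the reduction of \citep{key24} applies verbatim: one alternates the splitting lemma (block-diagonalization according to the distinct eigenvalues of the successive leading matrices), the eigenvalue-shift/shearing transformations $\mathrm{diag}(t^{k_1},\dots,t^{k_n})$, and, when a leading matrix is nilpotent and the rank cannot otherwise be lowered, a single ramification $x=t^{s_1}$. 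Tracking which elementary operations occur shows that the accumulated $T$ is exactly a product of shearings (with $k_i\ge 0$ after factoring out a global power of $t$) and formally invertible maps in $GL_n(\mathbb{C}[[t,y]])$, as asserted. The bound on $a_k$ is read off from the substitution: after $x=t^{s_1}$ the operator $x\,\partial/\partial x$ becomes $\tfrac{1}{s_1}\,t\,\partial/\partial t$ and $x^{-p}$ becomes $t^{-s_1 p}$, so the $\partial/\partial t$-form of the subsystem has a pole of order $s_1 p+1$ and the scalar part $a_k$ is a polynomial in $t^{-1}$ of degree at most $s_1 p+1$.

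The delicate point, and the place where complete integrability is indispensable, is that the scalar parts $a_k$ must have coefficients in $\mathbb{C}$ and not merely in $\mathcal{O}_y$ or $\mathbb{C}((y))$, even though the whole reduction is performed over a $y$-dependent coefficient ring. I would establish this by induction on the reduction steps, invoking Proposition \ref{eigenvaluespfaff} at each stage: the eigenvalues that are split off and shifted are eigenvalues of a leading constant matrix of an equivalent system, hence lie in the field of constants $\mathbb{C}$ and carry no $y$-dependence. Since every $a_k$ is assembled from such eigenvalues over the finitely many splitting and shearing steps, each coefficient $\xi_{jk}$ is a constant, and the residual $y$-dependence of the $x$-subsystem is confined to the regular-singular term $t^{-1}N_k(y)$ with $N_k(y)$ having entries in $\mathcal{O}_y$.

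Next I would use the integrability relation---written for the transformed system as $\partial\tilde B/\partial t-\partial\tilde A/\partial y=[\tilde A,\tilde B]$---to transfer the block structure to the $y$-subsystem. Once $\tilde A=\mathrm{Diag}(\tilde A_1,\dots,\tilde A_h)$ has blocks with pairwise distinct exponential parts $a_k$, writing this relation block by block shows that the leading term of the $(k,\ell)$ off-diagonal commutator is governed by $a_k-a_\ell\ne 0$, which forces that block of $\tilde B$ to vanish; hence $\tilde B=\mathrm{Diag}(\tilde B_1,\dots,\tilde B_h)$ with the same partition. Within a single block the relation collapses, because $a_k I_{n_k}$ is scalar, to $\partial\tilde B_k/\partial t=t^{-1}\bigl(\partial N_k/\partial y+[N_k,\tilde B_k]\bigr)$; comparing powers of $t$ lets one normalize $\tilde B_k$ to depend on $y$ alone (the decoupled equation $\partial Z/\partial y=\tilde B(y)Z$) and yields the compatibility $\partial N_k/\partial y=[\tilde B_k,N_k]$. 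Refining the partition so that the leading $y$-matrix $y^{q+1}\tilde B_k|_{y=0}$ of each block has a single eigenvalue---which is legitimate because, by Proposition \ref{eigenvaluespfaff} applied to the $y$-subsystem, its eigenvalues are constants and the splitting by them commutes with the scalar part $a_k I_{n_k}$ and respects $N_k$ through the compatibility relation---gives the final form $y^{q+1}\tilde B_k|_{y=0}=c_k I_{n_k}+F_k$ with $c_k\in\mathbb{C}$ and $F_k$ nilpotent. I expect the main obstacle to be the bookkeeping guaranteeing that integrability is preserved under each elementary transformation and that the shearings do not inflate the $y$-Poincar\'{e} rank beyond $q$, so that $y^{q+1}\tilde B\in\mathcal{O}_y$ is maintained; this is exactly the step that would fail in a non-integrable setting and that forces the simultaneous, rather than independent, treatment of the two subsystems.
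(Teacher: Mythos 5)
The paper does not prove this proposition at all: it states explicitly that the result is quoted as Proposition 3, page 654 of \citep{key53} (Charri\`ere--G\'erard) and that the proof is omitted, so there is no in-paper argument to compare against. Judged on its own, your sketch identifies the right ingredients but has a genuine gap exactly at the point that makes the theorem nontrivial. You assert that the univariate reduction of \citep{key24} applies ``verbatim'' to the $x$-subsystem over the coefficient ring $\mathbb{C}((y))$ and that ``tracking the elementary operations'' shows the accumulated $T$ is a product of nonnegative shearings and elements of $GL_n(\mathbb{C}[[t,y]])$. This is precisely what fails in general: the splitting lemma and Moser/shearing steps performed over $\mathbb{C}((y))$ generically produce transformations with denominators in $y$, destroying normal crossings and inflating the $y$-Poincar\'e rank. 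The paper's own Example \ref{exmnaive} is a counterexample to this naive strategy --- the univariate rank reduction applied to the first subsystem introduces a factor $y^{-1}$ into $\tilde A$ and raises the rank of the second subsystem. Showing that, thanks to complete integrability, the reduction \emph{can} be arranged so that all transformations lie in $GL_n(\mathbb{C}[[t,y]])$ (power series in both variables, not merely in $t$ over $\mathbb{C}((y))$) is the heart of the proof in \citep{key53,key3}, which is carried out with a different apparatus: stable lattices/free modules over $\mathcal{O}_y$, using that $\mathcal{O}_y$ is a principal ideal domain. You acknowledge this as ``the main obstacle'' in your final paragraph, but flagging the obstacle is not the same as overcoming it.

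A second, related problem is circularity in your use of Proposition \ref{eigenvaluespfaff}. That proposition applies to a completely integrable Pfaffian system \emph{with normal crossings}; to invoke it at each intermediate stage of your induction you must already know that every intermediate system retains normal crossings, which is exactly the unproved compatibility claim above. The downstream parts of your argument (vanishing of off-diagonal blocks of $\tilde B$ from $a_k \neq a_\ell$, the compatibility relation $\partial N_k/\partial y = [\tilde B_k, N_k]$, and the refinement by eigenvalues of $y^{q+1}\tilde B_k|_{y=0}$) are standard and essentially sound once the block structure of $\tilde A$ with constant $\xi_{jk}$ is in hand, but they rest on the unestablished foundation.
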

\begin{remark}
\label{exp}
Let $\bar{a}_k (t) = \sum_{\textbf{j=2}}^{s_1 p+1} \xi_{jk} t^{-j}$ then they are the $x$-\textit{exponential parts} of system \eqref{transformedpfaff2} (resp. system \eqref{originalpfaff2}). 
\end{remark}
\section{Formal Reduction of ODS}
\label{reviewods}
For $m=1$, system \eqref{originalpfaffoperator} reduces to the linear singular differential system of ordinary equations
\begin{equation}
\label{originalods}
x \frac{dY}{d x} = A (x) Y = x^{-p}  (A_0 + A_1 x + A_2 x^2 + \dots) Y
\end{equation}
where $A_0 := A(0)$ is the leading coefficient matrix and $p$ is a nonnegative integer denoting the \textit{Poincar\'e rank}. For a fundamental matrix of solutions, in analogy to \eqref{solutionpfaff2}, we write
\begin{equation} \label{solutionods} \Phi(x^{1/s}) \; x^{{\Lambda}} \; exp(Q (x^{-1/s})). \end{equation}
If $T \in GL_n(\mathbb{C}((x)))$ then the change of basis $Y = T Z$ results in the equivalent system 
\begin{equation}
\label{equivalentods}
x \frac{dZ}{d x} = \tilde{A} (x) Z= x^{-\tilde{p}} (\tilde{A}_0 + \tilde{A}_1 x + \tilde{A}_2 x^2 + \dots) Z. 
\end{equation}
The second author of this article developed in \citep{key24} a recursive algorithmic process that constitutes of finding, at every step, a change of basis which results in a system \eqref{equivalentods} \textit{equivalent} to system \eqref{originalods} which is either of lower \textit{Poincar\'e rank} or can be decoupled into systems of lower dimensions. At each step, the starting point is the nature of eigenvalues of the leading coefficient matrix $A_0$ according to which either a block-diagonalized equivalent system, a Moser-irreducible one, or the Katz invariant are computed. The following operations summarize the recursive process:
\begin{itemize}
\item \textbf{Block-diagonalization}: Whenever $A_0$ has at least two distinct eigenvalues, system \eqref{originalods} can be decoupled into systems of lower dimensions via the classical Splitting Lemma. For its statement and a constructive proof one may consult [Section 12, pages 52-54, \citep{key7}]. 
\item \textbf{Eigenvalue shifting}: Whenever $A_0$ has a single nonzero eigenvalue $\gamma \in \mathbb{C}$, the so-called Eigenvalue shifting
\begin{equation} \label{eigenvalueshifting} Y = exp ( \int \gamma x^{-p-1} dx) Z, \end{equation}
results in a system with a nilpotent leading coefficient matrix (in fact, $\tilde{A}(x)= A - x^{-p} \gamma I_n).$
\item \textbf{Moser-based rank reduction}: This is the rank reduction based on the reduction criterion defined by Moser in \citep{key19}. It results in an equivalent system whose \textit{Poincar\'e rank} is the  \textit{true Poincar\'e rank} and whose leading coefficient matrix has the minimal rank among any possible choice of a change of basis. For an efficient Moser-based rank reduction algorithm, one may consult \citep{key26}. If the leading matrix coefficient of the \textit{equivalent} Moser-irreducible system is still nilpotent, one proceeds to compute the Katz invariant, a process for which Moser-irreducibility is a prerequisite. 
\item \textbf{Katz invariant}: 
\begin{definition}
\label{katz}
Given system \eqref{originalods} whose $x$-\textit{exponential parts} are denoted by  ${\{\bar{w}_k\}}_{1 \leq k \leq n}$. For $1 \leq k \leq n$, let $\zeta_k$ be the minimum exponent in $x$ within the terms of $\bar{w}_k$.   The \textit{Katz invariant} of \eqref{originalods} (resp. $A(x)$) is then the rational number 
$$\kappa = - 1 - {min}_{1 \leq k \leq n} \; \zeta_k.$$
\end{definition}
\textit{Katz invariant} can be obtained from the characteristic polynomial of $A(x)$, i.e. $det(\lambda I - A(x))$, given that $A(x)$ is Moser-irreducible [e.g. Theorem 1 in \citep{key24}]. Consider a Moser-irreducible system \eqref{originalods} whose leading matrix coefficient is nilpotent and $\kappa= \frac{l}{m}$ with $l,m$ relatively prime positive integers. Then, a ramification, that is to say a re-adjustement of the independent variable $t = x^{1/m}$, followed by Moser-based rank reduction results in an \textit{equivalent} Moser-irreducible system whose \textit{Poincar\'e rank} is equal to $l$ and its leading matrix coefficient has at least $m$ distinct eigenvalues. Hence, block-diagonalization may be applied again. 
\end{itemize}
This recursive process results either in a group of decoupled systems with dimension $n=1$ (scalar case) or \textit{Poicar\'e rank} $p=0$ (system with singularity of first kind).  For the latter case one may consult Chapter 1 in \citep{key7} or \citep{key25} for a more general context. The changes of basis applied at every stage are used to construct a fundamental matrix of solutions \eqref{solutionods}. Algorithms of the four described operations are implemented in Maple (see ISOLDE \citep{key27}).
\section{Formal Reduction in the bivariate case}
\label{formalreduction}
Consider again the completely integrable system \eqref{originalpfaff2}
$$
\begin{cases}
x \frac{\partial Y}{\partial x} = A (x,y) Y= x^{-p} (A_0(y) + A_1(y) x + \dots) Y\\ 
y \frac{\partial Y}{\partial y} = B (x,y)  Y=  y^{-q} ({B}_0(x) + {B}_1(x) y + \dots) Y.
\end{cases} $$
A major difficulty within the symbolic manipulation of system \eqref{originalpfaff2} (resp. system \eqref{originalpfaffoperator}) arises from \eqref{gaugepfaff} as it is evident that any transformation applied to any of the subsystems alters the others. Hence, the generalization of the univariate-case techniques is not straightforward. In particular, the equivalent system does not necessarily inherit the normal crossings even for very simple examples, as exhibited by Example \ref{exmnaive} in [Section 4, \citep{key5}] which we recall here.
\begin{example}
\label{exmnaive}
Consider the following completely integrable pfaffian system with normal crossings of \textit{Poinca\'e rank} $(3,1)$ and \textit{true Poincar\'e rank} $(0,0)$.
\begin{equation} \label{exmnaivesys} \begin{cases}
x \frac{\partial Y}{\partial x} = A(x, y) Y = x^{-3}\begin{bmatrix} x^3 +y & y^2 \\ -1 & -y + x^3 \end{bmatrix}  Y\\
y \frac{\partial Y}{\partial y} = B(x, y) Y =  y^{-1} \begin{bmatrix} y & y^2 \\ -2 & -3\end{bmatrix} Y.
\end{cases} . \end{equation}
The change of basis $Y= \begin{bmatrix} x^3 & -y^2 \\ 0 & y \end{bmatrix} Z$ computed by the univariate-case Moser-based rank reduction algorithm, upon regarding the first subsystem as an ODS in $x$,  results in the following equivalent system
$$\begin{cases}
\label{gaugepfaffian}
x \frac{\partial Z}{\partial x} = \tilde{A} (x,y) Z= \begin{bmatrix} -2& 0 \\ \frac{-1}{y} & 1 \end{bmatrix}  Z\\
y \frac{\partial Z}{\partial y} = \tilde{B}(x, y) Z =  y^{-2} \begin{bmatrix} -y^2 & 0 \\ -2 x^3 & -2 y^2 \end{bmatrix} Z.
\end{cases} $$\\
We can see that such a transformation achieves the goal of diminishing the rank of the first subsystem, considered as an ODS, to its minimum ($\tilde{p}=0$). However, it alters the normal crossings as it introduces the factor $y$ in the denominator of an entry in $\tilde{A}$. Moreover, it elevates the rank of the second subystem.
\end{example}
The urge to preserve the normal crossings, whose importance is highlighted in the Introduction, motivates the following definition
 \begin{definition}
\label{compatible}
Let $T \in GL_n(K)$. We say that the change of basis $Y = T Z$ (resp. $T$) is \textit{compatible} with system \eqref{originalpfaff2} if the normal crossings of the system is preserved and the Poincar\'{e} rank of the individual subsystems is not elevated. 
\end{definition}
\begin{remark}
Clearly, if $T$ is a constant matrix or it lies in $GL_n(\mathcal{O})$ then it is \textit{compatible} with system \eqref{originalpfaff2}. 
\end{remark}
As in the univariate case, the main difficulties in formal reduction arise whenever the leading constant pair consists of nilpotent matrices. Since otherwise a block diagonalization can be attained. A generalization of the univariate-case Splitting Lemma is given with constructive proof in [Section 5.2, page 233, \citep{key1}]. We repeat hereby the theorem without proof.
\begin{theorem}
\label{blockpfaff}
Given system \eqref{originalpfaff2} with leading constant pair
$$A_{00} = diag(A_{00}^1, A_{00}^2) \; \text{and}\; B_{00} = diag(B_{00}^1, B_{00}^2).$$ 
If the matrices in one of the couples $(A_{00}^1, A_{00}^2)$ or $ (B_{00}^1, B_{00}^2)$ have no eigenvalues in common, then there exists a unique transformation $T \in GL_n(\mathcal{O})$ partitioned conformally
$$T(x,y) = \begin{bmatrix} I & T^{12} \\ T^{21} & I \end{bmatrix}$$
such that the change of basis $Y = T Z$ results in the following \textit{equivalent} system partitioned conformally with $A_{00}, B_{00},$ 
$$
\begin{cases}
x \frac{\partial Z}{\partial x} = \tilde{A} (x,y) Z =  diag(\tilde{A}^1, \tilde{A}^2) Z\\
y \frac{\partial Z}{\partial y} = \tilde{B} (x,y) Z =  diag(\tilde{B}^1, \tilde{B}^2) Z
\end{cases} $$
where $(\tilde{A}^1_{00},\tilde{A}^2_{00})= (A_{00}^1, A_{00}^2)$ and $(\tilde{B}^1_{00},\tilde{B}^2_{00})= (B_{00}^1, B_{00}^2)$.
\end{theorem}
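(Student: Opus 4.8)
The plan is to use integrability so that a single application of the classical (univariate) Splitting Lemma diagonalizes one subsystem and the second follows automatically. Write $T = I + N$ with $N = \begin{bmatrix} 0 & T^{12} \\ T^{21} & 0 \end{bmatrix}$ and insert it into the gauge relation \eqref{gaugepfaff}. Requiring $\tilde{A}$ and $\tilde{B}$ to be block diagonal is equivalent to the vanishing of their off-diagonal blocks; eliminating the diagonal blocks turns this into two pairs of matrix Riccati equations, an $x$-pair governed by $A$ and a $y$-pair governed by $B$. Since $T(0,0)=I$ and $x\partial_x T$, $y\partial_y T$ vanish on the respective coordinate axes, any such $T$ automatically preserves the leading constant pairs, so it remains only to produce $T \in GL_n(\mathcal{O})$ of the stated shape. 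By the symmetry of the hypothesis I assume $(B_{00}^1, B_{00}^2)$ share no eigenvalue and work with the $y$-pair.

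First I would regard the second subsystem $y\partial_y Y = B(x,y)Y$ as a singular ODS in $y$ with coefficients in $\mathcal{O}_x$ and solve its Riccati equations for $T^{12}, T^{21}$ recursively in powers of $y$. By Proposition \ref{eigenvaluespfaff} the eigenvalues of $B_0(x)$ are constants, so the clusters attached to $B_{00}^1$ and $B_{00}^2$ stay disjoint for all $x$; at each $y$-order the unknown coefficient is pinned down by inverting a Sylvester operator whose reduction modulo $x$ is $X \mapsto B_{00}^1 X - X B_{00}^2$, invertible precisely by the disjoint-spectra hypothesis, so that by completeness of $\mathcal{O}_x$ it is invertible over $\mathcal{O}_x$. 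This yields a unique $T \in GL_n(\mathcal{O})$ of the prescribed form with $\tilde{B} = \mathrm{diag}(\tilde{B}^1, \tilde{B}^2)$, uniqueness being built into the recursion.

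Next I would transfer the splitting to the first subsystem through integrability. Because $\tilde{\Delta}_i = T^{-1}\Delta_i T$ the transformed operators still commute, so $\tilde{A}, \tilde{B}$ satisfy \eqref{conditionpfaff2}; reading off its $(1,2)$ and $(2,1)$ blocks and using that $\tilde{B}$ is already block diagonal, the mixed terms cancel and leave the homogeneous relations $y\partial_y \tilde{A}^{12} = \tilde{B}^1 \tilde{A}^{12} - \tilde{A}^{12}\tilde{B}^2$ and $y\partial_y \tilde{A}^{21} = \tilde{B}^2 \tilde{A}^{21} - \tilde{A}^{21}\tilde{B}^1$. As $T \in GL_n(\mathcal{O})$ introduces no pole in $y$, the blocks $\tilde{A}^{12}, \tilde{A}^{21}$ are power series in $y$ with coefficients Laurent in $x$; their lowest $y$-coefficient satisfies $B_{00}^1 X - X B_{00}^2 = 0$ at $x=0$, hence vanishes, and a recursion in the $y$-degree then forces $\tilde{A}^{12} = \tilde{A}^{21} = 0$. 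This gives a block-diagonal $\tilde{A}$, and the preservation of the leading pairs noted above completes the splitting.

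I expect the genuinely delicate point to be the regular-singular range $q = 0$. There the Sylvester operator governing the $y^k$-coefficient is the shifted $X \mapsto (B_{00}^1 - kI)X - X B_{00}^2$, which fails to be invertible exactly when an eigenvalue of $B_{00}^1$ exceeds one of $B_{00}^2$ by a positive integer — a resonance not excluded by disjointness of the spectra alone, and one that can a priori leave nonzero off-diagonal blocks both in the construction of $T$ and in the transfer step. Handling it is the crux: one must exploit the constancy of the eigenvalues together with the fine structure that integrability imposes on the source terms to show that their resonant components vanish, keeping the recursions solvable within $GL_n(\mathcal{O})$. By contrast the irregular case $q \geq 1$ is routine, since the pole in $y$ makes the leading Sylvester operator the unshifted, automatically invertible $X \mapsto B_{00}^1 X - X B_{00}^2$.
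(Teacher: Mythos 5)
First, a point of reference: the paper does not actually prove Theorem \ref{blockpfaff} --- it states it and defers to the constructive proof in [Section 5.2, page 233, \citep{key1}] --- so there is no in-paper argument to compare yours against, and your proposal has to stand on its own. Its main line --- build $T=I+N$ by solving the off-diagonal Riccati equations of the subsystem whose leading constant blocks have disjoint spectra, using Proposition \ref{eigenvaluespfaff} to keep the Sylvester operator $X\mapsto B_0^1(x)X-XB_0^2(x)$ invertible over $\mathcal{O}_x$ (its determinant is a nonzero constant), then transfer the splitting to the other subsystem through the integrability condition \eqref{conditionpfaff2} --- is the standard route and is sound in the irregular case $q\ge 1$ (symmetrically $p\ge 1$ when the $A$-couple is used). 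Two touch-ups there: in the transfer step the $y$-coefficients of $\tilde A^{12}$ are meromorphic in $x$, so you should argue injectivity of the Sylvester operator over $\mathbb{C}((x))$ (again via constancy of the eigenvalues of $\tilde B_0$, which is similar to $B_0$) rather than ``at $x=0$''; and the preservation of the leading constant pair is cleanest via $\tilde A_0(y)=T(0,y)^{-1}A_0(y)T(0,y)$.

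The genuine problem is the point you flag and postpone: the resonant regular-singular case $q=0$. It is not merely delicate --- the resonant components do \emph{not} vanish in general, integrability does not rescue them, and the statement as printed then fails, so no argument can close that gap. Take $n=2$, $p=q=0$, $A=aI_2$ with $a\neq 0$, and $B=\begin{pmatrix}0&0\\ y&1\end{pmatrix}$: condition \eqref{conditionpfaff2} reduces to $x\frac{\partial B}{\partial x}=0$ and holds, and $B_{00}=diag(0,1)$ has disjoint $1\times 1$ blocks; yet for any $T=\begin{pmatrix}1&t\\ s&1\end{pmatrix}\in GL_2(\mathcal{O})$ the $(2,1)$ entry of $\tilde B$ equals $\bigl(y+s-y\frac{\partial s}{\partial y}\bigr)/(1-ts)$, and $y+s-y\frac{\partial s}{\partial y}=0$ has no solution $s\in\mathcal{O}$ (the coefficient of $x^0y^1$ reads $1+(1-1)s_{01}=1\neq 0$). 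This is exactly your resonance $\mu-\lambda=1$, visible as the logarithm in the fundamental solution, and it persists for $p\ge 1$, $q=0$ (replace $A$ by $ax^{-p}I_2$). So the theorem needs an additional hypothesis --- irregularity of the subsystem used for the splitting, or non-resonance of the two spectra modulo $\mathbb{Z}$ --- and your proof should either impose it explicitly or restrict the splitting couple to a subsystem of positive Poincar\'e rank; the plan to ``show the resonant components vanish'' cannot succeed in general.
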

By an eigenvalue shifting \eqref{eigenvalueshifting} we can then arrive at an equivalent system whose leading constant pair consists of nilpotent matrices. It can be easily verified that such a transformation is \textit{compatible} with system \eqref{originalpfaff2}. By Proposition \ref{eigenvaluespfaff}, the matrices of the leading coefficient pair are nilpotent as well.
The case of $n=1$ is straightforward and the case of $(p,q)=(0,0)$ is already resolved in [Chapter 3, \citep{key73}]. Henceforth, following Section \ref{reviewods}, the problem of formal reduction is reduced now to discussing two operations: computing Katz invariant and a \textit{compatible} Moser-based rank reduction. The former is the subject of the next section.

\section{Computing Exponential Parts and Katz Invariant}
\label{expopolymatrices}
In analogy to Definition \ref{katz}, the Katz invariant of system \eqref{originalpfaff2} is defined as follows.
\begin{definition}
\label{katz2}
Given system \eqref{originalpfaff2} whose $x,y$-\textit{exponential parts} are denoted by  ${\{\bar{a}_k\}}_{1 \leq k \leq n}$ and ${\{\bar{b}_k\}}_{1 \leq k \leq n}$ respectively. For $1 \leq k \leq n$, let $\zeta_k$ (resp. $\eta_k$) be the minimum exponent in $x$ (resp. $y$) within the terms of $\bar{a}_k$ (resp. $\bar{b}_k$).   The \textit{Katz invariant} of \eqref{originalpfaff2} is then the two-tuple of rational numbers $(\kappa_1, \kappa_2)$ where 
$$\begin{cases} \kappa_1 = - 1 - {min}_{1 \leq k \leq n} \; \zeta_k \\ \kappa_2 = - 1 - {min}_{1 \leq k \leq n} \; \eta_k .\end{cases}$$
\end{definition}

In this section, we show that the $x,y$-\textit{exponential parts} of system \eqref{originalpfaff2} are those of the two associated ODS defined below. Hence, the computation of the $x,y$-\textit{exponential parts} (consequently of $Q_1$, $Q_2$, and Katz invariant) is reduced to computations over a univariate field. 
\begin{definition}
Given system \eqref{originalpfaff2}. Let $\textbf{A}(x) := A (x,0)$ and $\textbf{B}(y) := B (0,y)$. We call the following the \textit{associated ODS} with the first and second subsystem respectively of \eqref{originalpfaff2}:
\begin{eqnarray} \label{associatedpfaff1} x \frac{d Y}{d x} &=& \textbf{A} (x) \; Y \\ \label{associatedpfaff2} y \frac{d Y}{d y} &=& \textbf{B} (y) \; Y \end{eqnarray}
\end{definition}
\begin{theorem}
\label{exponentialpfaff}
Given the completely integrable system \eqref{originalpfaff2}
$$
\begin{cases}
x \frac{\partial Y}{\partial x} = A (x,y) Y= x^{-p} (A_0(y) + A_1(y) x + \dots) Y\\ 
y \frac{\partial Y}{\partial y} = B (x,y)  Y=  y^{-q} ({B}_0(x) + {B}_1(x) y + \dots) Y.
\end{cases} $$
The $x,y$-\textit{exponential parts} of this system are those of its \textit{associated ODS}  \eqref{associatedpfaff1} and  \eqref{associatedpfaff2} respectively. \end{theorem}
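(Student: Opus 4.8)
The plan is to reduce the bivariate statement to univariate theory by specializing the reducing transformation at $y=0$. Since the exponential parts are invariants of the equivalence class, it suffices to exhibit, for the first subsystem, an equivalence between the associated ODS \eqref{associatedpfaff1} and a system already in Hukuhara-Turritin normal form whose $x$-exponential parts are the $\bar a_k$ of \eqref{transformedpfaff2}; the claim for the $y$-exponential parts then follows by the symmetric argument applied to the second subsystem. First I would invoke Proposition \ref{gerardtransformation}: after the ramification $x=t^{s_1}$ there is $T(t,y)\in GL_n(\mathbb{C}((t,y)))$, a product of a diagonal factor $\mathrm{diag}(t^{k_1},\dots,t^{k_n})$ and a factor in $GL_n(\mathbb{C}[[t,y]])$, carrying the ramified first subsystem to $\partial_t Z=\tilde A(t,y)Z$ with $\tilde A=\mathrm{Diag}(\tilde A_1,\dots,\tilde A_h)$ and $\tilde A_k=a_k(t^{-1})I_{n_k}+t^{-1}N_k(y)$.

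The heart of the argument is that specialization at $y=0$ commutes with the gauge action of the first subsystem. Writing the ramified first subsystem as $\partial_t Y=\mathcal A(t,y)Y$ with $\mathcal A=s_1 t^{-1}A(t^{s_1},y)$, the change of basis $Y=TZ$ produces $\tilde A=T^{-1}(\mathcal A\,T-\partial_t T)$. Because $\partial_t$ and the evaluation $y\mapsto 0$ act on independent variables, setting $y=0$ yields
$$\tilde A(t,0)=T(t,0)^{-1}\bigl(\mathcal A(t,0)\,T(t,0)-\partial_t T(t,0)\bigr),$$
and here $\mathcal A(t,0)=s_1 t^{-1}A(t^{s_1},0)=s_1 t^{-1}\mathbf A(t^{s_1})$ is precisely the ramification of the associated ODS \eqref{associatedpfaff1}. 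Thus $T(t,0)$ is a genuine change of basis carrying the associated ODS into the block-diagonal matrix $\tilde A(t,0)=\mathrm{Diag}\bigl(a_k(t^{-1})I_{n_k}+t^{-1}N_k(0)\bigr)$.

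It then remains to read off invariants. The matrix $\tilde A(t,0)$ is in Hukuhara-Turritin normal form, and by Remark \ref{exp} its $x$-exponential parts are the $\bar a_k(t)=\sum_{j=2}^{s_1p+1}\xi_{jk}t^{-j}$, that is, exactly those of the full system \eqref{originalpfaff2}; the nilpotent blocks $N_k(0)$ contribute only to the $t^{-1}$-order (logarithmic) part and leave the exponential parts untouched. Invariance of the exponential parts under $GL_n(\mathbb{C}((t)))$-equivalence then gives that the associated ODS \eqref{associatedpfaff1} has the same $x$-exponential parts as \eqref{originalpfaff2}. Repeating the construction with the roles of $x$ and $y$ interchanged, i.e. the symmetric analogue of Proposition \ref{gerardtransformation} reducing the second subsystem and specialized at $x=0$, settles the $y$-exponential parts via \eqref{associatedpfaff2}.

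The main obstacle I anticipate is the legitimacy of the specialization $y\mapsto 0$: one must check that $T(t,0)$ is still invertible and that no pole in $y$ is created along the way. Invertibility is safe because $\det T$ is, up to the monomial $t^{k_1+\cdots+k_n}$, a unit of $\mathbb{C}[[t,y]]$ whose nonzero constant term survives at $y=0$; and the structural description in Proposition \ref{gerardtransformation}, namely only nonnegative powers of $t$ together with a factor holomorphic in $(t,y)$, is exactly what guarantees that both $T$ and $T^{-1}$ restrict cleanly to $y=0$. The only other point requiring care is the commutation identity above, which hinges on the reduced first subsystem \eqref{transformedpfaff2} being, in its scalar (diagonal) part $a_k(t^{-1})$, a function of $t$ alone, so that the $y$-dependence is confined to the nilpotent blocks $N_k(y)$ that do not affect the exponential parts.
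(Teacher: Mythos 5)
Your proposal is correct and follows essentially the same route as the paper: invoke Proposition \ref{gerardtransformation}, extract the $y$-constant term of the gauge relation $\partial_t T = s_1 t^{-1} A T - T\tilde A$ (your evaluation at $y=0$ is the same as the paper's comparison of $y^0$-coefficients in the power-series expansions), observe that $\mathbf{T}=T(t,0)$ remains invertible over $\mathbb{C}((t))$ by the structure of $T$, and conclude via Remark \ref{exp} that the associated ODS shares the exponential parts $\bar a_k$. No substantive difference from the paper's argument.
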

\begin{proof}
We prove the theorem for $A(x,y)$. The same follows for $B(x,y)$ by interchanging the order of the subsystems in system \eqref{originalpfaff2}. Let $s_1, t$ and $T$ be as in Proposition \ref{gerardtransformation}.  Upon the change of independent variable $x=t^{s_1}$, the first subsystem of \eqref{originalpfaff2} is given by
\begin{equation}\label{firstpfaff}   \frac{\partial Y}{\partial t} =  s_1 t^{-1} A (t^{s_1},y) Y . \end{equation}
By change of basis $Y = T Z$ we arrive at the equivalent subsystem 
\begin{equation} \label{secondpfaff}  \frac{\partial Y}{\partial t} = \tilde{A}(t, y) Y \end{equation}
with the notations and properties as in Proposition \ref{gerardtransformation}.

It follows from \eqref{gaugepfaff} that
\begin{equation} \label{relation}  \frac{\partial T}{\partial t} = s_1 t^{-1} A T - T \tilde{A}. \end{equation}
On the other hand, we have the following expansions as formal power series in $y$ whose matrix coefficients lie in $\mathbb{C}((t))^{n \times n}$: 
$$ A(t,y)= \sum^{\infty}_{j=0} A_j(t) y^j, \; \tilde{A}(t,y)= \sum^{\infty}_{j=0} \tilde{A}_j(t) y^j,$$
\begin{equation} \label{expansion} \text{and} \quad T(t,y)= \sum^{\infty}_{j=0} T_j(t) y^j \end{equation} with leading terms $\textbf{A}(t^{s_1}),  \tilde{\textbf{A}},$ and  $\textbf{T}$ respectively.  We Plug \eqref{expansion} in \eqref{relation} and compare the like-power terms. In particular, we are interested in the relation between the leading terms which is clearly given by  
\begin{equation} \label{firstrelation} \frac{\partial \textbf{T}}{\partial t} = s_1 t^{-1} \textbf{A} \textbf{T} - \textbf{T} \tilde{\textbf{A}}. \end{equation} 
Due to the form of $T(t,y)$ characterized in Theorem \ref{gerardtransformation}, it is evident that $\textbf{T} \in GL_n(\mathbb{C}((t)))$. Hence, the systems given by $ \frac{\partial}{\partial t} - s_1 t^{-1} \textbf{A}(t^{s_1})$ (resp. $x\frac{\partial}{\partial x}-  \textbf{A}(x)$) and $\frac{\partial}{\partial t} -\tilde{\textbf{A}}$ are \textit{equivalent}. It follows that they have the same $x$-\textit{exponential parts} $\bar{a}_k (t)$ given in Remark \ref{exp}.
 \end{proof}

Two corollaries follow directly from Theorem \ref{exponentialpfaff}.
\begin{corollary}
Let $\kappa=(\kappa_1, \kappa_2)$ be as in Definition \ref{katz2} denoting the \textit{Katz invariant} of system \eqref{originalpfaff2}. Then $\kappa_1$ (resp. $\kappa_2$) is the \textit{Katz invariant} of the \textit{associate} ODS \eqref{associatedpfaff1} (resp. \eqref{associatedpfaff2}).
\end{corollary}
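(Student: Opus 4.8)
The plan is to observe that both Definition \ref{katz} and Definition \ref{katz2} express the Katz invariant as a quantity depending \emph{only} on the exponential parts: in each case it is $-1$ minus the smallest exponent of the independent variable occurring among the terms of all the exponential parts. Consequently, the corollary reduces entirely to the equality of the relevant collections of exponential parts, which has already been established in Theorem \ref{exponentialpfaff}.

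First I would treat $\kappa_1$. By Theorem \ref{exponentialpfaff}, the $x$-\textit{exponential parts} $\{\bar{a}_k\}_{1\leq k\leq n}$ of system \eqref{originalpfaff2} coincide with the $x$-\textit{exponential parts} of the associated ODS \eqref{associatedpfaff1}. Now $\zeta_k$, the minimum exponent in $x$ among the terms of $\bar{a}_k$, is defined in exactly the same way in Definition \ref{katz2} and in Definition \ref{katz}; since the $\bar{a}_k$ themselves are the same objects in both systems, the integers $\zeta_k$ agree term by term, and so do the minima $\min_{1\leq k\leq n}\zeta_k$. Therefore $\kappa_1 = -1 - \min_{1\leq k\leq n}\zeta_k$ is precisely the Katz invariant of \eqref{associatedpfaff1}.

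Next I would record that the argument for $\kappa_2$ is symmetric. Applying the second assertion of Theorem \ref{exponentialpfaff}, obtained by interchanging the roles of the two subsystems, identifies the $y$-\textit{exponential parts} $\{\bar{b}_k\}_{1\leq k\leq n}$ of \eqref{originalpfaff2} with those of the associated ODS \eqref{associatedpfaff2}. The same matching of definitions then yields $\kappa_2 = -1 - \min_{1\leq k\leq n}\eta_k$ as the Katz invariant of \eqref{associatedpfaff2}.

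I do not expect any genuine obstacle here: the content of the corollary is entirely carried by Theorem \ref{exponentialpfaff}, and the remaining work is the purely formal verification that the Katz invariant is a function of the exponential parts alone. The only point meriting a line of care is to check that Definition \ref{katz} and Definition \ref{katz2} agree in how they extract $\zeta_k$ (resp. $\eta_k$) from a given exponential part, so that no auxiliary data enters the minimum; this is immediate from the definitions as stated.
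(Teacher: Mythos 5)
Your proposal is correct and matches the paper's approach: the paper offers no separate argument, simply stating that this corollary "follows directly" from Theorem \ref{exponentialpfaff}, and your write-up just makes explicit that the Katz invariant is a function of the exponential parts alone. Nothing further is needed.
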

\begin{corollary}
\label{rank}
Let $\gamma=(\gamma_1, \gamma_2)$ denote the \textit{true Poincar\'e rank} of system \eqref{originalpfaff2}. Then $\gamma_1$ (resp. $\gamma_2$) is the \textit{true Poincar\'e rank} of the \textit{associate} ODS  \eqref{associatedpfaff1} (resp. \eqref{associatedpfaff2}).
\end{corollary}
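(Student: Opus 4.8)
The plan is to derive the statement from Theorem \ref{exponentialpfaff} through the univariate link between the \textit{Katz invariant} and the \textit{true Poincar\'e rank}. I will argue the first component, that $\gamma_1$ equals the \textit{true Poincar\'e rank} of \eqref{associatedpfaff1}; the second component follows verbatim after interchanging the two subsystems of \eqref{originalpfaff2}. Throughout I use the classical univariate fact that, for an ODS, the \textit{true Poincar\'e rank} equals $\lceil \kappa \rceil$, where $\kappa$ denotes its \textit{Katz invariant} (so it is $0$ precisely in the \textit{regular singular} case); this is the content underlying the Moser-based reduction recalled in Section \ref{reviewods}.

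First I would apply this to the associated ODS \eqref{associatedpfaff1}. By the preceding corollary its \textit{Katz invariant} equals $\kappa_1$, so its \textit{true Poincar\'e rank} is $\lceil \kappa_1 \rceil$. It then suffices to show $\gamma_1 = \lceil \kappa_1 \rceil$, which I would establish by two inequalities. For the lower bound, every system \textit{equivalent} to \eqref{originalpfaff2} shares its $x$-\textit{exponential parts}, hence its \textit{Katz invariant} $\kappa_1$; the first subsystem of any such equivalent system, read as an ODS, therefore has \textit{Katz invariant} $\kappa_1$ and consequently \textit{Poincar\'e rank} at least $\lceil \kappa_1 \rceil$, since the \textit{Poincar\'e rank} of an ODS dominates its \textit{Katz invariant} and is an integer. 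Minimizing over $T \in GL_n(K)$ yields $\gamma_1 \ge \lceil \kappa_1 \rceil$. For the upper bound, the existence Theorem \ref{gerardexistence} supplies a change of basis carrying \eqref{originalpfaff2} to its Hukuhara-Turritin normal form, whose first subsystem has \textit{Poincar\'e rank} exactly $\lceil \kappa_1 \rceil$, giving $\gamma_1 \le \lceil \kappa_1 \rceil$. Combining the two bounds gives $\gamma_1 = \lceil \kappa_1 \rceil$, the \textit{true Poincar\'e rank} of \eqref{associatedpfaff1}.

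I expect the upper bound to be the delicate step. Theorem \ref{gerardexistence} realizes the normal form over the \emph{ramified} field $\mathbb{C}((t_1,t_2))$, whereas $\gamma$ is defined by changes of basis in the unramified $GL_n(K)$; one must therefore verify that the integer rank $\lceil \kappa_1 \rceil$ is already attained without ramification and, more importantly, by a \textit{compatible} change of basis that neither destroys the normal crossings nor raises the rank of the second subsystem, the very pathology exhibited by Example \ref{exmnaive}. This is exactly what the \textit{compatible} Moser-based reduction of Section \ref{moser} guarantees; for the present corollary it is enough that such a reduction exists and reaches $\lceil \kappa_1 \rceil$, so that the simultaneity requirement in the definition of the \textit{true Poincar\'e rank} does not impose a larger value.
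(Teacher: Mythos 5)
Your proof takes essentially the same route as the paper's: the paper's entire argument is the inequality $\gamma_1 - 1 \leq \kappa_1 \leq \gamma_1$ from Remark 3 of \citep{key24}, combined with the observation that $\gamma_1$ is an integer --- which is precisely your identity $\gamma_1 = \lceil \kappa_1 \rceil$ applied on both sides via the preceding corollary on Katz invariants. Your two-inequality elaboration and the remarks on ramification and compatibility merely make explicit what the paper's one-line proof leaves implicit.
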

\begin{proof} 
Consider system \eqref{associatedpfaff1}. By (Remark 3, \citep{key24}) we have $\gamma_1 -1 \leq \kappa_1 \leq \gamma_1.$ This fact establishes the proof since  $\gamma_1$ is an integer. The same holds for system \eqref{associatedpfaff2}. 
\end{proof}

Hence, the \textit{Katz invariant}, the \textit{true Poincar\'e rank}, and most importantly $Q_1, Q_2$ in \eqref{solutionpfaff2}, can be computed efficiently over univariate fields using the existing package ISOLDE \citep{key27}. Moreover, since now we know $\kappa$ at any stage of formal reduction, it is left to give a \textit{compatible} Moser-based rank reduction of system \eqref{originalpfaff2}. We remark however, that although the rank reduction of \citep{key5} reduces the \textit{Poicar\'e rank} to its minimal integer value, Moser-based rank reduction results in an \textit{equivalent} system for which the \textit{Poincar\'e rank} and the rank of the leading matrix coefficient are both minimal. In the univariate case, this is a necessary component of formal reduction, in particular when computing the \textit{Katz invariant}.
\section{Moser-based Rank Reduction}
\label{moser}
We consider again system \eqref{originalpfaff2} and label its subsystems as follows:
 \begin{eqnarray}
\label{pfaffian1}
x \frac{\partial Y}{\partial x} = A(x, y) Y = x^{-p} (A_0(y) + A_1(y) x + \dots) Y\\
\label{pfaffian2}
y \frac{\partial Y}{\partial y} = B(x, y) Y =  y^{-q} ({B}_0(x) + {B}_1(x) y + \dots) Y.
\end{eqnarray}
Let $T \in GL_n(K)$. To keep track of applied transformations, we use the following notation for the \textit{equivalent} system resulting upon the change of basis $Y= T Z$.
\begin{eqnarray}
\label{pfaffiannew1}
x \frac{\partial Z}{\partial x} &=& \tilde{A} Z, \quad T[A]:= \tilde{A}=T^{-1} (A T -x\frac{\partial T}{\partial x})\\
\label{pfaffiannew2}
y \frac{\partial Z}{\partial y} &=& \tilde{B} Z, \quad T[B]:= \tilde{B}= T^{-1} (B T -y\frac{\partial T}{\partial y}).
\end{eqnarray}
In particular, $T^{-1} A T$ will be referred to as the \textit{similarity term} of $T[A]$.

We study Moser-based rank reduction of subsystem \eqref{pfaffian1}. The same results follow for subsystem \eqref{pfaffian2} by interchanging \eqref{pfaffian1} its order in system \eqref{originalpfaff2}. We adapt the algorithm given for singularly-perturbed linear differential systems in \citep{key102} since it is well-suited to bivariate fields. It suffices to verify that the transformations in the proposed algorithm of system \eqref{pfaffian1} are compatible with the second. We will see that this can be guaranteed by giving an additional structure to $A_0(y)$, in particular form \eqref{gaussformpfaffian}, rather than the form proposed in (Lemma 1, \citep{key102}).  We remark that the algorithm of \citep{key102} is a generalization of Moser-based rank reduction developed by the second author of this article in \citep{key41} over a univariate field.  We recall that, in the sequel, we drop $x$ and $y$ from the notation whenever ambiguity is unlikely to arise.  Set $r= rank (A_0(y))$. 

Following \citep{key41,key19} we define the Moser rank  and Invariant of system \eqref{pfaffian1} as the respective rational numbers:
$$m(A) = \; \text{max} \;  (0, p + \frac{r}{n})$$ 
$$\quad \mu (A) = \; \text{min} \; \{ m(T[A]) | T \in GL_n(K) \}. $$
\begin{definition}
The system \eqref{pfaffian1} (the matrix $A$ respectively) is called Moser-reducible if $m(A) > \mu (A)$, otherwise it is said to be Moser-irreducible.
\end{definition}
It is easy to see from this definition that system \eqref{pfaffian1} is regular if and only if $\mu(A) \leq 1$, i.e. the \textit{true Poincar\'e rank} is zero. 
\begin{definition}
System \eqref{originalpfaff2} is said to be Moser-irrreducible whenever each of its subsystems \eqref{pfaffian1} and \eqref{pfaffian2} is. 
\end{definition}
The following theorem is the analog of (Theorem 1, \citep{key102}).
\begin{theorem}
\label{moserpfaffian}
Given System \eqref{pfaffian1} such that $A_0(y)$ is of rank $r$ and  $m(A)>1$.
A necessary and sufficient condition for $A$ to be Moser-reducible, i.e. for the existence of a $T(x, y) \in Gl_n(K)$ such that $r (\tilde{A}_0) < r$, is that the polynomial 
$$\theta(\lambda) := {x}^{r} \; det(\lambda I + \frac{A_0}{x} + A_1 )|_{x=0}$$ 
vanishes identically in $\lambda$. 
Moreover, $T(x, y)$ can always be chosen to be compatible with system \eqref{pfaffian2}. More precisely, it is a product of transformations in $GL_n(\mathcal{O}_y)$ and polynomial transformations of the form $diag (x^{\alpha_1}, \dots, x^{\alpha_n})$ where $\alpha_1, \dots, \alpha_n$ are nonnegative integers. 
\end{theorem}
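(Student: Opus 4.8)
The plan is to separate the statement into two parts: the characterization of reducibility by the vanishing of $\theta(\lambda)$, which can be inherited from the univariate theory, and the \emph{compatibility} of the reducing transformation, which is the genuinely new content in the bivariate setting. For the first part I would regard $A(x,y)$ as a matrix in the single variable $x$ over the coefficient field $\mathbb{C}((y))$, so that subsystem \eqref{pfaffian1} becomes an ordinary differential system to which (Theorem 1, \citep{key102}) applies verbatim. The polynomial $\theta(\lambda)=x^{r}\det(\lambda I+A_0/x+A_1)|_{x=0}$ is exactly the Moser polynomial of this ordinary system, and its coefficients lie in $\mathcal{O}_y$; hence, under the standing hypothesis $m(A)>1$, $A$ is Moser-reducible over $\mathbb{C}((y))((x))$ if and only if $\theta\equiv 0$. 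Since any $T\in GL_n(K)$ with $r(\tilde A_0)<r$ is in particular a transformation over the larger field $\mathbb{C}((y))((x))$, the \emph{necessity} of $\theta\equiv 0$ follows at once. It remains to prove \emph{sufficiency} together with the ``Moreover'' clause, i.e. to realize the reduction by a transformation that is compatible in the sense of Definition \ref{compatible}.

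For the construction I would follow the univariate reduction step of \citep{key41,key102}, carried out over the local ring $\mathcal{O}_y$ rather than over a field. First I would bring $A_0(y)$ to the Gauss form \eqref{gaussformpfaffian} by a transformation $P\in GL_n(\mathcal{O}_y)$; the reason for using \eqref{gaussformpfaffian} in place of the normal form of (Lemma 1, \citep{key102}) is that its pivots can be chosen to be units of $\mathcal{O}_y$ (nonzero at $y=0$), so that the elimination never leaves $GL_n(\mathcal{O}_y)$. In this form the hypothesis $\theta\equiv 0$ translates into a rank deficiency of the pencil block assembled from $A_0$ and $A_1$, which furnishes a second transformation $Q\in GL_n(\mathcal{O}_y)$ creating the required zero block. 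Finally I would apply a shearing $S=\mathrm{diag}(x^{\alpha_1},\dots,x^{\alpha_n})$ with $\alpha_i\in\{0,1\}$; using $S^{-1}x\,\partial_x S=\mathrm{diag}(\alpha_1,\dots,\alpha_n)$ one computes $S[A]=S^{-1}AS-\mathrm{diag}(\alpha_1,\dots,\alpha_n)$ and checks, exactly as in the univariate case, that $\theta\equiv 0$ forces $r(\tilde A_0)<r$.

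The decisive point is compatibility with subsystem \eqref{pfaffian2}. The factors $P$ and $Q$ lie in $GL_n(\mathcal{O}_y)$ and are therefore compatible by the Remark following Definition \ref{compatible}. The shearing $S$ is not in $GL_n(\mathcal{O})$, so its compatibility must be checked directly: since $S$ does not depend on $y$ we have $y\,\partial_y S=0$ and hence $S[B]=S^{-1}BS$, so the factor $y^{-q}$ and the $y$-Poincar\'e rank are untouched. The only way normal crossings could be destroyed is through the block on which $\alpha_i>\alpha_j$, where $S^{-1}BS$ multiplies the corresponding block of $B$ by $x^{-1}$. I would prove that this block of $B$ already vanishes at $x=0$ to the order absorbed by $S$, so that $S[B]\in y^{-q}\mathcal{O}^{n\times n}$ and $q$ is not elevated. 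This vanishing is forced by the integrability condition \eqref{conditionpfaff2}, which couples the block structure imposed on $A_0(y)$ through \eqref{gaussformpfaffian} to that of $B$; establishing it is the main obstacle and the step where the argument departs from the purely univariate one. Once it is in hand, the product $T=P\,Q\,S$ has entries in $K$, satisfies $r(\tilde A_0)<r$, and is compatible with \eqref{pfaffian2}, which completes the proof.
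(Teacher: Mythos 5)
Your outline is essentially the paper's own proof: necessity is delegated to the univariate Moser theory of (Theorem 1, \citep{key102}) by viewing the first subsystem over $\mathbb{C}((y))$, and sufficiency is realized by composing a unimodular normalization of $A_0(y)$ in $GL_n(\mathcal{O}_y)$ (Lemma \ref{gauss1}), a second unimodular transformation $Q(y)$ extracted from the rank deficiency of the pencil $G_\lambda(A)$ (Proposition \ref{gauss3pfaffian}), and a shearing $S=diag(x I_r, I_{n-r-\rho}, x I_\rho)$, the compatibility of $S$ with \eqref{pfaffian2} being the only genuinely bivariate issue. The difficulty is that the step you yourself call ``the main obstacle'' is precisely the step you do not carry out, and it is not a formality: it is the entire new content of the theorem relative to \citep{key41,key102}. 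In Proposition \ref{shearingpfaffian} the paper derives from \eqref{conditionpfaff2}, by multiplying through by $x^{p}$ and setting $x=0$, the identity $B(0,y)A_0(y)-A_0(y)B(0,y)=y\,\partial A_0(y)/\partial y$; it then writes $A_0(y)$ in the block form \eqref{form0} imposed by \eqref{particularform3pfaffian}, partitions $B(0,y)$ conformally, and reads off from three groups of block entries that $B^{13}(0,y)$, $B^{23}(0,y)$ and $B^{43}(0,y)$ vanish, each deduction resting on a full-column-rank property of a block of $A_0$ that was engineered in Lemma \ref{gauss1} and Proposition \ref{gauss3pfaffian}.

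Two points in your sketch need correction in light of this. First, you motivate the refined form \eqref{gaussformpfaffian} by the need to keep the pivots unital so that the elimination stays in $GL_n(\mathcal{O}_y)$; but the transformation of (Lemma 1, \citep{key102}) is already unimodular over $\mathcal{O}_y$, so that is not the issue. The real purpose of additionally reducing the first block column to full column rank $d$ is to make the integrability argument close: the full column rank of the $r\times d$ block formed by $A_0^{11}$ over $A_0^{21}$ forces $B^{12}(0,y)=O$, and this, combined with the full column rank of $V_1^{12}$, is what forces $B^{43}(0,y)=O$. Second, the computation also uses that $M_1^{12}=O$ in \eqref{particularform3pfaffian}, a fact the paper must (and does) establish by inspecting the construction of $Q(y)$; without it $A_0(y)$ does not have the form \eqref{form0} on which the whole block calculation rests. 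So the strategy is right and matches the paper, but as written the proposal establishes only the univariate part of the theorem and defers the part that makes it a statement about Pfaffian systems.
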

\begin{remark}
By Corollary \ref{rank}, the \textit{true Poincar\'e rank} of system \eqref{originalpfaff2} can be deduced from its \textit{associated ODS}. However, the reduction criterion in Theorem \ref{moserpfaffian} guarantees that the rank of the leading coefficient matrix of the \textit{equivalent} Moser-irreducible system is minimum as well, not only its Poincar\'e rank. Moreover, this criterion furnishes the construction of the change of basis $T(x,y)$ as will be demonstrated in the sequel. 
\end{remark}
Theorem \ref{moserpfaffian} is to be proved after giving its necessary building blocks. We start by the following lemma. 
\begin{lemma}
\label{gauss1}
There exists a unimodular transformation $U(y) \in GL_n(\mathcal{O}_y)$ such that for the resulting \textit{equivalent} system \eqref{pfaffiannew1}, we have
\begin{equation} \label{gaussformpfaffian} \tilde{A}_0 (y) =  \begin{bmatrix}\tilde{A}_0^{11} & O & O\\ \tilde{A}_0^{21} & O_{r-d} & O \\ \tilde{A}_0^{31} & \tilde{A}_0^{32} & O_{n-r}  \end{bmatrix}\end{equation} 
where $$\begin{bmatrix} \tilde{A}_0^{11} \\ \tilde{A}_{0}^{21}  \end{bmatrix} \quad \text{and} \quad  \begin{bmatrix} \tilde{A}_0^{11} & O \\ \tilde{A}_0^{21} & O_{r-d}  \\ \tilde{A}_0^{31} & \tilde{A}_0^{32}  \end{bmatrix}$$ are  $r \times d$ and $n \times r$ matrices of full column ranks $d$ and $r$ respectively.  
\end{lemma}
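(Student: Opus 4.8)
The plan is to exploit that $U$ depends on $y$ alone, so that the term $x\,\partial_x U$ in \eqref{pfaffiannew1} vanishes and the transformation acts on every coefficient of $A$ by pure conjugation; in particular $\tilde{A}_0 = U^{-1} A_0 U$. The lemma is then really a statement about choosing a good $\mathcal{O}_y$-basis of $\mathcal{O}_y^n$: if $u_1,\dots,u_n$ denote the columns of $U$, then the $j$-th column of $\tilde{A}_0$ is the coordinate vector of $A_0 u_j$ in the basis $\{u_i\}$. Reading off \eqref{gaussformpfaffian} columnwise, the target form demands exactly that $A_0 u_j = 0$ for $j=r+1,\dots,n$, that $A_0 u_j$ lie in the span of $u_{r+1},\dots,u_n$ for $j=d+1,\dots,r$, and imposes no condition on $u_1,\dots,u_d$. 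Equivalently, I must produce a basis adapted to the flag $\ker A_0 \subseteq \ker A_0^2 \subseteq \mathcal{O}_y^n$, with $u_{r+1},\dots,u_n$ a basis of $\ker A_0$ and $u_{d+1},\dots,u_n$ a basis of $\ker A_0^2$; here $d = \operatorname{rank}(A_0^2)$ is the generic rank forced by the block sizes.

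The construction I would carry out proceeds over the ring $\mathcal{O}_y=\mathbb{C}[[y]]$, which is a discrete valuation ring and in particular a principal ideal domain. The submodules $\ker A_0$ and $\ker A_0^2$ of the free module $\mathcal{O}_y^n$ are saturated, since a nonzero scalar multiple of a vector lies in such a kernel only if the vector itself does; hence each is a direct summand and free, of ranks $n-r$ and $n-d$ respectively, where $r$ and $d$ are the generic ranks of $A_0$ and $A_0^2$ over the fraction field. I would first pick a basis $u_{r+1},\dots,u_n$ of $\ker A_0$, extend it through $\ker A_0^2$ by the $r-d$ vectors $u_{d+1},\dots,u_r$, and finally complete to a full basis $u_1,\dots,u_n$ of $\mathcal{O}_y^n$. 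Taking $U$ to be the matrix with columns $u_1,\dots,u_n$, the fact that the $u_i$ form an $\mathcal{O}_y$-basis guarantees that $\det U$ is a unit, so $U\in GL_n(\mathcal{O}_y)$; lying in $GL_n(\mathcal{O})$ it is automatically \textit{compatible} with \eqref{pfaffian2} in the sense of Definition \ref{compatible}.

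It then remains to verify the two rank assertions and the zero blocks. The zero blocks of \eqref{gaussformpfaffian} all follow directly from the column placement: columns $r+1,\dots,n$ vanish and columns $d+1,\dots,r$ are supported in rows $r+1,\dots,n$, which forces the vanishing of the remaining blocks in the first two block-rows. For the ranks, the $n\times r$ block is precisely the first $r$ columns of $\tilde{A}_0$; since conjugation preserves rank, $\operatorname{rank}\tilde{A}_0=r$, and as the other columns vanish these $r$ columns are independent, so the block has full column rank $r$. For the $r\times d$ block I would observe that its columns are the projections of $A_0 u_1,\dots,A_0 u_d$ modulo $\operatorname{span}(u_{r+1},\dots,u_n)=\ker A_0$; a dependence among them would push a nonzero combination of $u_1,\dots,u_d$ into $\ker A_0^2$, contradicting that these vectors complete a basis of $\ker A_0^2$, so the block has full column rank $d$.

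The genuinely delicate point is not the linear algebra over the field but the passage to the ring. Over $\mathbb{C}((y))$ the adapted basis is routine, whereas here I must ensure that $U$ and $U^{-1}$ have entries in $\mathbb{C}[[y]]$ with no $y$ in the denominators — precisely the failure that would destroy the normal crossings of the second subsystem. This is exactly where the saturation, hence direct-summand, property of $\ker A_0$ and $\ker A_0^2$ over the valuation ring $\mathcal{O}_y$ is indispensable, and in practice it is realized by Gaussian elimination over $\mathcal{O}_y$ with pivoting on units, which is the source of the lemma's name. A secondary point I would state carefully is that the full-rank conditions are meant generically over $\mathbb{C}((y))$, since the ranks of $A_0(y)$ and of the displayed blocks may well drop upon specialization at $y=0$.
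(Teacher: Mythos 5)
Your proof is correct, and it arrives at exactly the transformation the paper has in mind, but by a different route: the paper's own proof is a two-line reduction, applying the unimodular column-reduction lemma (Lemma~1 of \citep{key102}) first to $A_0(y)$ and then again to the resulting leading $r\times r$ block, and setting $U=\mathrm{diag}(U_2,I_{n-r})\,U_1$. Your argument instead builds $U$ from scratch as a basis of $\mathcal{O}_y^n$ adapted to the flag $\ker A_0\subseteq\ker A_0^2\subseteq\mathcal{O}_y^n$, justified by the saturation/direct-summand property over the discrete valuation ring $\mathbb{C}[[y]]$. The two constructions coincide in substance --- the second application of the cited lemma to the block $A_0^{11}$ is precisely the refinement of the complement of $\ker A_0$ by $\ker A_0^2$, since $A_0^{11}v_1=0$ is equivalent to $(v_1,0)\in\ker A_0^2$ in the reduced coordinates --- but yours is self-contained where the paper delegates the ring-theoretic content (unit pivoting over $\mathcal{O}_y$) to the external reference. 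What your version buys is the explicit identification $d=\operatorname{rank}(A_0^2)$ and a conceptual explanation of the block structure, which the paper leaves implicit; what it costs is that it hides the effective Gaussian-elimination procedure that Algorithm~\ref{algorithmpfaffian} actually calls. Your closing caveat that the full-column-rank statements are generic (over $\mathbb{C}((y))$, possibly dropping at $y=0$) is apt and consistent with the paper's usage of $r=\operatorname{rank}(A_0(y))$. I find no gap.
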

\begin{proof}
It suffices to apply the unimodular transformation of (Lemma 1, \citep{key102}) to $A_0(y)$ and then to the first left block of the resulting similar matrix. Denoting respectively by $U_1(y) , U_2(y)$ these transformations,  we set
 
 $U(y)= diag (U_2(y), I_{n-r}).U_1(y)$. Hence, the leading coefficient matrix of $ U[A]$ has the form \eqref{gaussformpfaffian}. Clearly, $U(y)$ is \textit{compatible} with system \eqref{pfaffian2} as it is unimodular. 
\end{proof}

Hence, without loss of generality, we assume that $A_0(y)$ is in form \eqref{gaussformpfaffian} and partition $A_1(y)$ conformally with $A_0$. Let
\begin{equation} \label{glambdaformpfaffian} G_{\lambda}(A)= \begin{bmatrix} A_0^{11} & O & A_1^{13} \\ A_0^{21} & O & A_1^{23}  \\ A_0^{31} & A_0^{32} & A_1^{33} + \lambda I_{n-r}\end{bmatrix} .\end{equation}
Then we have the following Lemma given and proved as (Lemma 2, \citep{key102}).
\begin{lemma}
\label{glambda}
$Det(G_{\lambda}(A) \equiv 0$ vanishes identically in $\lambda$ if and only if $\; \theta(\lambda)$ does.   
\end{lemma}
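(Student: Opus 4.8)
The plan is to show directly that $\theta(\lambda)$ and $\det(G_\lambda(A))$ coincide, so that the asserted equivalence becomes transparent. I would start from the definition in Theorem \ref{moserpfaffian}, writing $M(\lambda) := \lambda I + \frac{A_0}{x} + A_1$, and partition $M(\lambda)$ into the same $3\times 3$ block pattern (of sizes $d$, $r-d$, $n-r$) in which $A_0$ appears in form \eqref{gaussformpfaffian}, partitioning $A_1$ conformally. The decisive structural observation is that, in this special form, $A_0$ has nonzero entries only in its first $r$ columns; consequently every pole term $\tfrac{1}{x}(A_0)_{ij}$ of $M(\lambda)$ is confined to the first $r$ columns, while the last $n-r$ columns are already holomorphic in $x$.

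Next I would multiply each of these first $r$ columns by $x$. This scales the determinant by exactly $x^r$ and, because each affected entry has a simple pole, clears all denominators: the resulting matrix $\hat M(\lambda)$ is polynomial in $x$. Thus $x^r\det M(\lambda)=\det\hat M(\lambda)$ is regular at $x=0$, which both justifies the evaluation in the definition of $\theta$ and lets me compute it. Setting $x=0$ kills the $xA_1$ contributions in the scaled columns, leaving the $A_0$-blocks there, while the last $n-r$ columns retain $A_1^{13}$, $A_1^{23}$ and $\lambda I_{n-r}+A_1^{33}$. A block-by-block check then shows $\hat M(\lambda)|_{x=0}$ is precisely the matrix $G_\lambda(A)$ of \eqref{glambdaformpfaffian}, so that
\[ \theta(\lambda) = x^r\det M(\lambda)\big|_{x=0} = \det\hat M(\lambda)\big|_{x=0} = \det\big(G_\lambda(A)\big). \]
Since the two polynomials in $\lambda$ are literally equal, one vanishes identically if and only if the other does, which is the claim; this recovers (Lemma 2, \citep{key102}) in the present bivariate setting.

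I do not expect a genuine obstacle here, as the argument is essentially bookkeeping, but two points require care. First, the confinement of all pole terms to the first $r$ columns is exactly what the normal form \eqref{gaussformpfaffian} of Lemma \ref{gauss1} buys us; without that preliminary reduction the column-scaling trick would not clear the denominators simultaneously, and this is the step most likely to hide an error. Second, all coefficients here are $y$-dependent, lying in the fraction field of $\mathcal{O}_y$, so ``vanishes identically in $\lambda$'' must be read as the simultaneous vanishing of every $\lambda$-coefficient as a function of $y$; the identity $\theta(\lambda)=\det(G_\lambda(A))$ holds at this level and so respects that reading. I would also remark that the proof in fact delivers the stronger equality of the two polynomials, of which the stated equivalence is an immediate consequence.
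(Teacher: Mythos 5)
Your proof is correct: with $A_0$ in the form \eqref{gaussformpfaffian} its only nonzero entries sit in the first $r$ columns, so scaling those columns of $\lambda I + \frac{A_0}{x} + A_1$ by $x$ multiplies the determinant by $x^r$, clears every pole, and evaluating the resulting polynomial matrix at $x=0$ yields exactly the matrix $G_\lambda(A)$ of \eqref{glambdaformpfaffian}, whence $\theta(\lambda)=\det(G_\lambda(A))$. The paper supplies no argument of its own here --- it defers entirely to (Lemma 2, \citep{key102}) --- and your column-scaling computation is the standard proof of that cited lemma, with the added benefit that it delivers equality of the two polynomials rather than merely the equivalence of their identical vanishing.
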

\begin{proposition}
\label{gauss3pfaffian}
Suppose that  $m(A) >1$ and $det(G_{\lambda}(A)) \equiv 0$ where $G_{\lambda}(A)$ is given by \eqref{glambdaformpfaffian}. Then there exists a unimodular matrix $Q(y)$ in $GL_n(\mathcal{O}_y)$ with $det \; Q(y) = \pm 1$, \textit{compatible} with system \eqref{pfaffian2}, such that the matrix $G_{\lambda}(Q[A])$ has the form 
\begin{equation} \label{particularform3pfaffian} G_{\lambda}(Q[A]) = \begin{bmatrix} A_0^{11} & O & U_1^{11} & U_2^{11} \\ A_0^{21} & O & U_1^{21} & U_2^{21} \\ V_1^{11} & V_1^{12} & W_1 + \lambda I_{n- r -\rho} & W_2 \\ M_1^{11} & M_1^{12} & M_2 & W_3 + \lambda I_\rho \end{bmatrix} ,\end{equation}
where $0 \leq \rho \leq n-r,\; W_1,\; W_3$ are square matrices of orders $(n-r-\rho)$ and $\rho$ respectively , $M_1^{12}$ is a null matrix, and
\begin{eqnarray} \label{particularconditionbpfaffian} rank \; \begin{bmatrix} A_0^{11} & U_1^{11}\\ A_0^{21}  & U_1^{21} \\ M_1^{11} & M_2 \end{bmatrix} = rank \; \begin{bmatrix} A_0^{11} & U_1^{11}\\ A_0^{21} &  U_1^{21} \end{bmatrix}, \\ \label{particularconditionapfaffian} rank \; \begin{bmatrix} A_0^{11} & U_1^{11}\\ A_0^{21} &  U_1^{21}  \end{bmatrix} < r . \end{eqnarray}
\end{proposition}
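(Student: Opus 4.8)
The plan is to follow the construction underlying the corresponding lemma of \citep{key102} applied to the pencil $G_{\lambda}(A)$ of \eqref{glambdaformpfaffian}, and to verify the single point that is genuinely new in the bivariate setting: that the whole reduction can be realized by elementary operations over the discrete valuation ring $\mathcal{O}_y=\mathbb{C}[[y]]$, so that the resulting $Q(y)$ lies in $GL_n(\mathcal{O}_y)$ with $\det Q(y)=\pm1$. Throughout I partition rows and columns as in \eqref{gaussformpfaffian}, into blocks of sizes $d$, $r-d$ and $n-r$, and recall that $\lambda$ enters $G_{\lambda}(A)$ only through the summand $\lambda I_{n-r}$ in the lower-right block. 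Since $Q$ is independent of $x$, the transformed system satisfies $Q[A]=Q^{-1}AQ$, so that $\tilde{A}_0=Q^{-1}A_0Q$ and $\tilde{A}_1=Q^{-1}A_1Q$; for $Q$ preserving the flag attached to the first $r$ columns, $G_{\lambda}(Q[A])$ is then a unimodular transform of $G_{\lambda}(A)$, which is what allows a sequence of column and row operations on $G_{\lambda}(A)$ to be realized by one change of basis $Q(y)$.

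First I would exploit the hypotheses. By Lemma \ref{gauss1} the first $r$ columns of $A_0$ have full column rank $r$, whereas the assumption $\det(G_{\lambda}(A))\equiv0$, equivalently $\theta(\lambda)\equiv0$ by Lemma \ref{glambda}, forces the $n$ columns of $G_{\lambda}(A)$ to be dependent for every value of $\lambda$. Because $\lambda$ occurs only in the last $n-r$ columns, this dependence is uniform in $\lambda$ and localizes a rank drop in the lower-right $(n-r)$-block of the pencil; the size of that drop fixes the integer $\rho$, with $0\le\rho\le n-r$, hence the splitting $n-r=(n-r-\rho)+\rho$ appearing in \eqref{particularform3pfaffian}.

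Next I would realize this splitting by elementary transformations. On the last $n-r$ columns I would perform unimodular column operations over $\mathcal{O}_y$ that isolate the $\rho$ reducible directions; to keep the pencil structure $\lambda I_{n-r}$ intact, each column operation on the lower-right block is matched by the conjugate row operation, i.e. that block is acted on by a similarity $R^{-1}(\cdot)R$ with $R\in GL_{n-r}(\mathcal{O}_y)$. This produces the two diagonal summands $W_1+\lambda I_{n-r-\rho}$ and $W_3+\lambda I_{\rho}$. A final batch of row operations on the last $\rho$ rows, using the nonzero structure of the first $r$ columns, clears the entry in the $(r-d)$-column strip, giving $M_1^{12}=O$. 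It then remains to check the two rank identities of the statement: condition \eqref{particularconditionapfaffian}, asserting that the displayed $r$-row submatrix has rank strictly less than $r$, records the rank deficiency forced by $\det(G_{\lambda}(A))\equiv0$, while condition \eqref{particularconditionbpfaffian}, asserting that adjoining the $M$-rows leaves this rank unchanged, encodes the maximality of the chosen $\rho$.

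The crucial, and only essentially new, step is this last verification over $\mathcal{O}_y$ rather than the combinatorics of the normal form. Over the field-of-fractions coefficients of \citep{key102} the rank-revealing reductions are automatic; over $\mathcal{O}_y$ they are legitimate only when every pivot used is a unit, since inverting a non-unit would introduce a denominator in $y$ and thereby destroy the normal crossings, i.e. break compatibility with \eqref{pfaffian2}. The point is that the full-column-rank normalization supplied by Lemma \ref{gauss1} saturates exactly the submodules involved, so that the relevant invariant factors over the DVR $\mathbb{C}[[y]]$ are units and each column or row reduction can be carried out by a matrix in $GL_n(\mathcal{O}_y)$, namely transvections with coefficients in $\mathcal{O}_y$ and permutations, arranged to keep the determinant equal to $\pm1$. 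Assembling these factors yields $Q(y)\in GL_n(\mathcal{O}_y)$ with $\det Q(y)=\pm1$; since $GL_n(\mathcal{O}_y)\subseteq GL_n(\mathcal{O})$, the Remark following Definition \ref{compatible} shows that $Q$ is compatible with subsystem \eqref{pfaffian2}. I expect this control of $y$-denominators to be the main obstacle.
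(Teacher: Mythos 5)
Your proposal follows essentially the same route as the paper: since $Q(y)$ is independent of $x$, the gauge action \eqref{pfaffiannew1} reduces to the similarity term $Q^{-1}AQ$, the transformation is then built exactly as in the proof of Proposition~2 of \citep{key102} (working over $\mathcal{O}_y$ so that all pivots/eliminations stay unimodular), and compatibility with \eqref{pfaffian2} is immediate from $Q(y)\in GL_n(\mathcal{O}_y)\subseteq GL_n(\mathcal{O})$ and the Remark following Definition~\ref{compatible}. Your extra discussion of why the reduction survives the passage from the fraction field to the DVR $\mathbb{C}[[y]]$ is a reasonable expansion of what the paper leaves implicit in the citation of \citep{key102}.

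The one place where you genuinely diverge is the claim $M_1^{12}=O$, and it is also the weakest step. You propose to obtain it by ``a final batch of row operations on the last $\rho$ rows,'' but $G_\lambda(Q[A])$ is induced by a \emph{similarity} of $A$: a transvection that adds a multiple of a row from the first two strips to a row in the $\rho$-strip is necessarily accompanied by the inverse column operation, and you do not check that this leaves the already-established zero blocks, the full-rank normalization \eqref{gaussformpfaffian}, and the pencil structure $W_3+\lambda I_\rho$ intact. The paper avoids this entirely: by construction, every row of $\bigl[M_1^{11}\;\;M_1^{12}\;\;M_2\bigr]$ is a linear combination of the rows of
$$\begin{bmatrix} A_0^{11} & O & U_1^{11} \\ A_0^{21} & O & U_1^{21} \end{bmatrix},$$
whose middle column strip vanishes, so $M_1^{12}=O$ comes for free with no additional transformation. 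You should either supply the verification that your extra row operations preserve the normal form, or replace that step by this linear-combination observation, which is exactly the content of condition \eqref{particularconditionbpfaffian} extended to the full row blocks.
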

\begin{proof}
Since $Q(y)$ is independent form $x$, it follows from \eqref{pfaffiannew1} that the discussion can be restricted to the similarity term of the transformations. Hence, the transformation $Q(y)$ can be constructed as in the proof of (Proposition 2, \citep{key102}). As it is unimodular, it is necessarily \textit{compatible} with system \eqref{pfaffian2}. It remains to remark however, that each row of $[M_1^{11} \quad M_1^{12} \quad M_2]$ is a linear combination of the rows of 
$$\begin{bmatrix} A_0^{11} & O & U_1^{11} \\ A_0^{21} & O & U_1^{21} \end{bmatrix}.$$ Hence, by construction, $M_1^{12}$ is a null matrix. \end{proof}

\begin{proposition}
\label{shearingpfaffian}
If $m(A) >1$ and $G_{\lambda}(A) \equiv 0$ is as in \eqref{particularform3pfaffian} with conditions \eqref{particularconditionbpfaffian} and \eqref{particularconditionapfaffian} satisfied, then system \eqref{pfaffian1} (resp. $A$) is Moser-reducible and reduction can be carried out with the so-called shearing $Y = S Z$ where $S=diag(x I_r , I_{n-r-\rho}, x I_\rho)$ if $\rho \neq 0$ and $S=diag(x I_r , I_{n-r})$ otherwise. Furthermore, this shearing is \textit{compatible} with system \eqref{pfaffian2}. 
\end{proposition}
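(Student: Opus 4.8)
The plan is to verify directly that the shearing $S=\mathrm{diag}(xI_r, I_{n-r-\rho}, xI_\rho)$ (or $S=\mathrm{diag}(xI_r,I_{n-r})$ when $\rho=0$) lowers the Moser rank of subsystem \eqref{pfaffian1}, and then to check separately that this same $S$ does not harm subsystem \eqref{pfaffian2}. Since $S$ is a polynomial shearing of the type permitted in Theorem \ref{moserpfaffian}, the bulk of the argument is a careful bookkeeping of how the entries of $A(x,y)$ transform under $S[A]=S^{-1}(AS-x\,\partial S/\partial x)$. First I would compute $S^{-1}AS$ block by block using the partition inherited from \eqref{particularform3pfaffian}: conjugation multiplies off-diagonal blocks by appropriate powers $x^{\pm 1}$, and the term $-S^{-1}x\,\partial S/\partial x$ contributes only a constant diagonal correction on the scaled blocks (namely $-1$ on the first $r$ rows and the last $\rho$ rows). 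The key point is that conditions \eqref{particularconditionbpfaffian} and \eqref{particularconditionapfaffian} are exactly what guarantee that, after this conjugation, the coefficient of the most singular power of $x$ in $\tilde A$ has rank strictly less than $r$; the rank-drop encoded in \eqref{particularconditionapfaffian} ensures that the new leading block $\begin{bmatrix} A_0^{11} & U_1^{11}\\ A_0^{21} & U_1^{21}\end{bmatrix}$ carries rank $<r$, while \eqref{particularconditionbpfaffian} (together with $M_1^{12}=0$) ensures the rows introduced by the $\rho$-block do not restore the lost rank.

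Concretely, I would track the new Poincaré rank and the new leading coefficient matrix $\tilde A_0$, showing that the pair $(\tilde p, r(\tilde A_0))$ yields $m(S[A])<m(A)$; by Theorem \ref{moserpfaffian} and Lemma \ref{glambda} the hypothesis $\det(G_\lambda(A))\equiv 0$ is precisely the certificate that such a reduction is possible, so the computation must succeed. The structural form \eqref{particularform3pfaffian} produced by Proposition \ref{gauss3pfaffian} was arranged so that this shearing is the natural reducing transformation, mirroring the univariate construction in \citep{key41} and the singularly-perturbed case of \citep{key102}; I expect the $x$-side computation to follow that template essentially verbatim, treating the $y$-dependent entries as coefficients in $\mathcal{O}_y$.

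The genuinely new part, and the main obstacle, is the compatibility claim for subsystem \eqref{pfaffian2}. Here I must examine $S[B]=S^{-1}(BS-y\,\partial S/\partial y)$. Because $S$ depends only on $x$, the derivative term vanishes entirely, so $S[B]=S^{-1}BS$ is a pure conjugation by a diagonal matrix whose entries are powers of $x$. The worry is that conjugation could introduce negative powers of $x$ into $\tilde B$ (destroying normal crossings) or raise the $y$-Poincaré rank $q$. I would argue that the integrability condition \eqref{conditionpfaff2}, combined with the block structure of $A_0$ in form \eqref{gaussformpfaffian}, forces the off-diagonal blocks of $B$ to carry matching powers of $x$, so that every entry of $S^{-1}BS$ remains in $\mathcal{O}_x[[y]][y^{-1}]$ with denominator in $y$ unchanged; in other words the factor $y^{-q}$ is untouched and no spurious $x^{-1}$ appears. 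This is exactly the point where the earlier insistence on form \eqref{gaussformpfaffian} rather than the plain form of \citep{key102} pays off, and proving that the power-of-$x$ pattern in $B$ is compatible with the shearing is where the real work lies.
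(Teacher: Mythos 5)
Your proposal follows essentially the same route as the paper: a block-by-block computation of $S^{-1}AS - xS^{-1}\partial S/\partial x$ showing that conditions \eqref{particularconditionbpfaffian} and \eqref{particularconditionapfaffian} force $rank(\tilde A_0)<r$, followed by the observation that $S[B]=S^{-1}BS$ and that the integrability condition \eqref{conditionpfaff2} evaluated at $x=0$, combined with the rank structure of $A_0$ in form \eqref{form0}, forces the blocks $B^{13},B^{23},B^{43}$ to vanish at $x=0$ so that no poles in $x$ are introduced. The part you flag as ``where the real work lies'' is exactly what the paper carries out, by extracting the commutator relation $B(0,y)A_0(y)-A_0(y)B(0,y)=y\,\partial A_0(y)/\partial y$ and reading off the vanishing of $B_{0y}^{13},B_{0y}^{23},B_{0y}^{12},B_{0y}^{43}$ from the full-rank blocks of $A_0$.
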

\begin{proof}
We partition $A(x, y)$ conformally with   \eqref{particularform3pfaffian} 
$$A= \begin{bmatrix} A^{11}  & A^{12} & A^{13} & A^{14} \\ A^{21}  & A^{22} & A^{23} & A^{24}\\  A^{31}  & A^{32} & A^{33} & A^{34}\\  A^{41}  & A^{42} & A^{43} & A^{44}\end{bmatrix}$$
where $A^{11} , A^{22} , A^{33} , A^{44}$ are of dimensions $d, r-d, n-r-\rho, \rho$ respectively. 
It is easy to verify then, by \eqref{pfaffiannew1} and \eqref{pfaffiannew2}, that 
\begin{eqnarray*} \tilde{A}(x, y) &=& S^{-1} A S - x S^{-1} \frac{\partial}{\partial x} S = \begin{bmatrix} A^{11}  & A^{12} & x^{-1} A^{13} & A^{14} \\   A^{21}  & A^{22} &  x^{-1}  A^{23} & A^{24} \\   x A^{31}  & x A^{32} &   A^{33} & x A^{34} \\  A^{41}  & A^{42}  &  x^{-1} A^{43} & A^{44} \end{bmatrix}\\
& -& \; diag (I_{r}, O_{n-r-\rho},   I_\rho) \\ 
\tilde{B}(x, y) &=& S^{-1} B S  = \begin{bmatrix} B^{11}  & B^{12} & x^{-1} B^{13} & B^{14} \\   B^{21}  & B^{22} &  x^{-1}  B^{23} & B^{24} \\   x B^{31}  & x B^{32} &   B^{33} & x B^{34} \\  B^{41}  & B^{42}  &  x^{-1} B^{43} & B^{44} \end{bmatrix} .\end{eqnarray*}
Hence, the new leading coefficient matrix is 
$$ \tilde{A}_0 = \begin{bmatrix} A_0^{11} & O & U_1^{11}& O \\ A_0^{21} & O & U_1^{21} & O \\ O& O &O& O \\ M_1^{11} & O & M_2 &  O \end{bmatrix} $$
where $rank (\tilde{A}_0)  < r$ since  \eqref{particularconditionbpfaffian} and \eqref{particularconditionapfaffian} are satisfied.

It remains to prove the compatibility of $S$ with the subsystem \eqref{pfaffian2}, in particular, that the normal crossings is preserved. It suffices to prove that the submatrices of $B(x,y)$ which are multiplied by $x^{-1}$, i.e. $B^{13}, B^{23}, B^{43}$, have no term independent from $x$ so that no poles in $x$ are introduced. This can be restated as requiring $B^{13}(0,y), B^{23}(0,y),$ and $ B^{43}(0,y)$ to be null. This requirement is always satisfied due to the integrability condition \eqref{conditionpfaff2}. In fact, we can obtain from the former that
\begin{equation}
\label{integrability00}
B(0,y) A_0(y) - A_0(y) B(0,y) = y\frac{\partial A_0(y)}{\partial y}.
\end{equation}
On the other hand, since $G_{\lambda}(A)$ is as in \eqref{particularform3pfaffian} then $A_0(y)$ have the following form  \eqref{form0} and $B(0,y)$ can be partitioned conformally
\begin{equation} \label{form0} A_0(y)=\begin{bmatrix} A_0^{11} & O & O & O \\ A_0^{21} & O & O & O \\ V_1^{11} & V_1^{12} & O & O \\ M_1^{11} & O & O & O \end{bmatrix} .\end{equation}
\begin{equation} \label{form1} B(0,y)= \begin{bmatrix} B_{0y}^{11}  & B_{0y}^{12} & B_{0y}^{13} & B_{0y}^{14} \\ B_{0y}^{21}  & B_{0y}^{22} & B_{0y}^{23} & B_{0y}^{24}\\ B_{0y}^{31}  & B_{0y}^{32} & B_{0y}^{33} & B_{0y}^{34}\\  B_{0y}^{41}  & B_{0y}^{42} & B_{0y}^{43} & B_{0y}^{44} \end{bmatrix} .\end{equation}
Inserting \eqref{form0} and \eqref{form1} in \eqref{integrability00}, one can obtain the desired results by equating the entries of \eqref{integrability00}. In particular, upon investigating the entries of the L.H.S. in (Column 3), (Rows 1 and 2, Column 2), and (Row 4, Column 2), we observe the following respectively: 
\begin{itemize}
\item $\begin{bmatrix} A_0^{11} & O \\ A_0^{21} & O  \\ V_1^{11} & V_1^{12}  \\ M_1^{11} & O  \end{bmatrix} \begin{bmatrix} B_{0y}^{13} \\ B_{0y}^{23}  \end{bmatrix} =  O_{n, n-\rho - r} $. The first matrix is of full rank $r$ by construction thus $ \begin{bmatrix} B_{0y}^{13} \\ B_{0y}^{23}  \end{bmatrix}$ is null. 
\item $\begin{bmatrix} A_0^{11} \\ A_0^{21}  \end{bmatrix} B_{0y}^{12} = O_{r, r-d}$. The first matrix is of full rank $d$ by construction thus $B_{0y}^{12}$ is null.
\item Finally, $B_{0y}^{43} \; V_1^{12} - M_1^{11} \; B_{0y}^{12}  =O_{\rho, (r-d)}$. But $B_{0y}^{12}$ is null and $V_1^{12}$ is of full column rank $r-d$ by construction and so $B_{0y}^{43}$ is null as well.  
\end{itemize} This completes the proof. \end{proof} 

We give hereby the proof of Theorem \ref{moserpfaffian}.

\begin{proof} (Theorem \ref{moserpfaffian}) For the necessary condition, we proceed as in the proof of Theorem 1 in \citep{key102}. As for the sufficiency, we set $r= rank (A_0(y))$. Without loss of generality, we can assume that $A_0(y)$ has the form \eqref{gaussformpfaffian}. Let $G_{\lambda}(A)$ be given as in \eqref{glambdaformpfaffian}. Then, by Lemma \ref{glambda}, $det(G_{\lambda}(A))$ vanishes identically in $\lambda$ if and only if $\; \theta(\lambda)$ does. Then the matrix $S[Q[A]]$ where $S, Q$ are as in Propositions \ref{gauss3pfaffian} and \ref{shearingpfaffian} respectively, has the desired property. 
\end{proof}
\begin{corollary}
\label{corollarymoserpfaffian}
Given system \eqref{originalpfaff2} with \textit{Poincar\'e rank} $(p,q)$.  A necessary and sufficient for it to be Moser-reducible is that one of the polynomials 
$$\begin{cases} \theta_A (\lambda) :={x}^{rank(A_0)} \; det(\lambda I + \frac{A_0}{x} + A_1 )|_{x=0} \\  \theta_B(\lambda) := {y}^{rank(B_0)} \; det(\lambda I + \frac{B_0}{y} + B_1 )|_{y=0} \end{cases}$$ 
vanishes identically in $\lambda$. An equivalent Moser-irreducible system can be attained via a change of basis compatible with both subsystems \eqref{pfaffian1} and \eqref{pfaffian2}. 
\end{corollary}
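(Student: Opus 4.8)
The plan is to reduce the statement to Theorem \ref{moserpfaffian}, which already settles the single-subsystem case, by combining the definition of Moser-irreducibility of the whole system with the symmetry between the two subsystems. By definition, system \eqref{originalpfaff2} is Moser-irreducible exactly when both \eqref{pfaffian1} and \eqref{pfaffian2} are; equivalently, it is Moser-reducible if and only if at least one of its subsystems is. I would therefore first apply Theorem \ref{moserpfaffian} verbatim to subsystem \eqref{pfaffian1}, obtaining that it is Moser-reducible if and only if $\theta_A(\lambda) \equiv 0$. Then, using the interchange of the roles of the two subsystems invoked repeatedly above (replacing $(x,p,A)$ by $(y,q,B)$), the same theorem applied to \eqref{pfaffian2} gives that it is Moser-reducible if and only if $\theta_B(\lambda) \equiv 0$. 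Taking the disjunction yields the asserted criterion: the system is Moser-reducible if and only if $\theta_A \equiv 0$ or $\theta_B \equiv 0$.

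For the constructive claim I would lean on the compatibility clause of Theorem \ref{moserpfaffian}. If $\theta_A \equiv 0$, the theorem supplies a reducing $T \in GL_n(K)$, a product of transformations in $GL_n(\mathcal{O}_y)$ and shearings $diag(x^{\alpha_1}, \dots, x^{\alpha_n})$, that is compatible with \eqref{pfaffian2} in the sense of Definition \ref{compatible}; symmetrically, if $\theta_B \equiv 0$ there is a compatible reducing transformation for \eqref{pfaffian2}. To reach a fully Moser-irreducible equivalent system I would iterate: at each stage test $\theta_A$ and $\theta_B$, and while one of them vanishes apply the corresponding compatible reduction to the offending subsystem. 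Each such transformation is compatible with both subsystems, so the composition is as well.

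The main obstacle is termination of this interleaved process, since reducing one subsystem perturbs the other through the gauge relation \eqref{gaugepfaff}, and it is not a priori clear that alternating reductions of \eqref{pfaffian1} and \eqref{pfaffian2} cannot cycle. To control this I would exhibit a well-founded decreasing measure. Compatibility guarantees that a reduction of one subsystem never elevates the Poincar\'e rank of the other and preserves the normal crossings, while by Corollary \ref{rank} the true Poincar\'e rank $(\gamma_1, \gamma_2)$ is a fixed invariant bounding $(p,q)$ from below at every stage. Ordering the pairs of Moser ranks $(m(A), m(B))$ lexicographically, with the Moser invariants $\mu(A), \mu(B)$ (which are equivalence invariants) as floors, each step strictly lowers the Moser rank of the subsystem on which it acts without raising that of the other; since these quantities lie in a discrete set bounded below, the iteration terminates at a system for which neither $\theta_A$ nor $\theta_B$ vanishes, that is, a Moser-irreducible one. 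A secondary point to check is the boundary regime $m(A) \leq 1$ (resp. $m(B) \leq 1$), where Theorem \ref{moserpfaffian} does not apply: there the corresponding Poincar\'e rank is already $0$ and that subsystem is regular singular, so no reduction in that variable is needed and the argument proceeds on the remaining subsystem alone.
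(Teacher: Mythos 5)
Your reduction to Theorem \ref{moserpfaffian} via the definition of Moser-irreducibility of the full system is exactly the intended argument: the paper gives no separate proof of the corollary, treating the criterion as immediate from applying the theorem to each subsystem in turn, and packaging the constructive content into Algorithm \ref{algorithmpfaffian}. The one place where your write-up is thinner than it should be is the termination argument for the interleaved process. You justify ``without raising that of the other'' by appealing to compatibility (Definition \ref{compatible}), but compatibility only controls the Poincar\'e rank of the other subsystem, whereas the Moser rank $m(A)=\max(0,p+r/n)$ also involves $r=\mathrm{rank}(A_0)$; a priori a compatible transformation could keep $p$ fixed while raising $r$, which would break your lexicographic descent. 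The gap is fillable: the transformations produced by Theorem \ref{moserpfaffian} for subsystem \eqref{pfaffian2} are products of matrices in $GL_n(\mathcal{O}_x)$ and shearings $\mathrm{diag}(y^{\beta_1},\dots,y^{\beta_n})$ independent of $x$, and a short computation with \eqref{pfaffiannew1} shows each acts on $A_0(y)$ by a similarity (by $U(0)$ in the first case, by $S$ itself in the second, the derivative term contributing nothing to the $x^{-p}$ coefficient when $p\geq 1$), so $\mathrm{rank}(A_0)$ and hence $m(A)$ are in fact preserved, not merely not elevated. This is also what implicitly licenses the paper's simpler non-interleaved strategy in Algorithm \ref{algorithmpfaffian}, which reduces \eqref{pfaffian1} completely, then \eqref{pfaffian2} completely, and never revisits the first subsystem. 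Your handling of the boundary case $m(A)\leq 1$ is correct and matches the guard $p>0$ in the algorithm's loops.
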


We illustrate the process by this simple example.
\begin{example}
\label{simple}
\begin{equation}
\label{exm}
\begin{cases}
x \frac{\partial Y}{\partial x} = x^{-3} \begin{bmatrix} x^3+ x^2+y & y^2 \\ -1 & x^3 + x^2 -y \end{bmatrix} Y \\
y \frac{\partial Y}{\partial y} = 
y^{-2} \begin{bmatrix} y^2 -2 y -6 & y^3 \\ -2 y & -3 y^2 -2 y -6 \end{bmatrix}Y 
\end{cases} \end{equation}

The \textit{associated} ODS are then respectively:
$$\begin{cases}x \frac{\partial Y}{\partial x} = x^{-3} \begin{bmatrix} x^3+ x^2 & 0 \\ -1 & x^3 + x^2 \end{bmatrix} Y,\\
y \frac{\partial Y}{\partial y} = y^{-2} \begin{bmatrix} y^2 -2 y -6 & y^3 \\ -2 y & -3 y^2 -2 y -6 \end{bmatrix}Y. \end{cases}$$

Via ISOLDE, we compute $Q_1=\frac{-1}{x} I_2 $ and $Q_2=\frac{3}{y^2} + \frac{2}{y} I_2. $ Thus we have $s_1 = s_2 = 1$ and \eqref{solutionpfaff2} is given by
$$ \Phi(x,y) \; x^{{\Lambda}_1} \; y^{{\Lambda}_2} \; e^{\frac{-1}{x}} \; e^{\frac{3}{y^2} + \frac{2}{y}}.$$

Upon the eigenvalue shifting $Y=  e^{\frac{-1}{x}} e^{\frac{3}{y^2} + \frac{2}{y}}  Z$ we get from \eqref{exm}
$$
\begin{cases}
x \frac{\partial Z}{\partial x} = x^{-3} \begin{bmatrix} x^3+y & y^2 \\ -1 & x^3  -y \end{bmatrix} Z  \\
y \frac{\partial Z}{\partial y} =  y^{-1} \begin{bmatrix} y  & y^2 \\ -2  & -3 y  \end{bmatrix} Z
\end{cases} $$
We arrive at the system of Example \eqref{exmnaive}. By Algorithm \ref{algorithmpfaffian}, we compute $T_1=\begin{bmatrix} y x^3 & -y \\ 0 & 1 \end{bmatrix}$. Hence, by $Z = T_1 U$, we have
$$
\begin{cases}
x  \frac{\partial U}{\partial x} = \begin{bmatrix} -2 & 0 \\ -y & 1 \end{bmatrix} U  \\
y \frac{\partial U}{\partial y} =  \begin{bmatrix} -2 & 0\\ -2 x^3 & -1 \end{bmatrix} U
\end{cases} 
$$
By a simple calculation, we find $T_2=\begin{bmatrix} 1 & 0 \\ \frac{y}{3} + 2 x^3 &-1 \end{bmatrix}$. 

A fundamental matrix of solutions is then given by  $$ T_1 T_2 \; x^{\Lambda_1} \; y^{\Lambda_2}e^{\frac{-1}{x}} e^{\frac{3}{y^2} + \frac{2}{y}}$$
where $\Lambda_1=\begin{bmatrix} -2 & 0 \\ 0 & 1 \end{bmatrix}$ and $\Lambda_2=\begin{bmatrix} -2 & 0 \\ 0 & -1 \end{bmatrix}$.
\end{example}
\begin{algorithm}
\caption{Moser-based Rank Reduction of System \eqref{originalpfaff2}}
\label{algorithmpfaffian}
\textbf{Input:} $A(x,y), B(x,y)$ of \eqref{originalpfaff2}  \\
\textbf{Output:} $T(x, y)$ a change of basis and a Moser-irreducible equivalent system \{$T[A], T[B]$\}. In particular, the Poincar\'e rank of this system is its true Poincar\'e rank.\\

\begin{algorithmic}
\State $T \gets I_{n}$; $p  \gets$ Poincar\'e rank of $A$; 
\State $U(y)  \gets $ Lemma \ref{gauss1};
\State $A \gets  U^{-1} A U$; $T  \gets  T U$;
\While {Det($G_{\lambda}(A)=0$) and $p >0$} \do \\
\State $Q(y), \rho  \gets$ Proposition \ref{gauss3pfaffian};
\State $S(x) \gets $ Proposition \ref{shearingpfaffian};
\State $P  \gets  Q S$; $T  \gets  T P$;
\State $A \gets  P^{-1} A P - x S^{-1} \frac{\partial S}{\partial x}$;
\State $p  \gets $ Poincar\'e rank of $A$;
\State  $U(y)  \gets $ Lemma \ref{gauss1};
\State $A \gets  U^{-1} A U$; $T  \gets  T U$;
\EndWhile .\\
$B \gets T^{-1} B T - yT^{-1} \frac{\partial T}{\partial y}$;\\
$T \gets I_{n}$; $q  \gets$ Poincar\'e rank of $B$; 
\State $U(x)  \gets $ Lemma \ref{gauss1};
\State $B \gets  U^{-1} B U$; $T  \gets  T U$;
\While {Det($G_{\lambda}(B)=0$) and $q >0$} \do \\
\State $Q(x), \rho  \gets$ Proposition \ref{gauss3pfaffian};
\State $S(y) \gets $ Proposition \ref{shearingpfaffian};
\State $P  \gets  Q S$; $T  \gets  T P$;
\State $B \gets  P^{-1} B P - y S^{-1} \frac{\partial S}{\partial y}$;
\State $q  \gets $ Poincar\'e rank of $B$;
\State  $U(x)  \gets $ Lemma \ref{gauss1};
\State $B \gets  U^{-1} B U$; $T  \gets  T U$;
\EndWhile .\\
$A \gets T^{-1} A T - xT^{-1} \frac{\partial T}{\partial x}$;\\
\Return{(T, A, B)}.
\end{algorithmic}
\end{algorithm}
\section{Conclusion and Further Investigations}
\label{conclusion}
We gave an explicit method to compute the  $x,y$-\textit{exponential parts}  of a \textit{completely integrable pfaffian system with normal crossings in two variables}. This gives the main information about the asymptotic behavior of its solutions. Moreover, this new approach limited the computations to a finite number of constant matrices instead of matrix-valued functions and constituted an eminent portion of the formal reduction. To complement our work, we gave a Moser-based rank reduction algorithm. Both results, allowed us to generalize the formal reduction of the univariate case as developed in \citep{key24} to the bivariate case. One field to investigate for this bivariate system would be an algorithm to construct a basis for the space of regular solutions (see, e.g., \citep{key25,key40}, for $m=1$).

Another research direction over bivariate fields is the generalization of techniques developed here to \textit{completely integrable pfaffian systems} with no restriction to the locus of singularities (i.e. general crossings rather than normal crossings). Such systems are discussed e.g. in \citep{key32}.

 And there remains of course the ultimate task of formal reduction of the multivariate system with no restriction to the number of variables. Theorem \ref{gerardexistence} was first given and proved in \citep{key3} for bivariate systems. In the theory developed there, one operator $\Delta_i$ was considered and the fact that $\mathcal{O}_{y}$ and $\mathcal{O}_{x}$ are principal ideal domains was used in many places to prove that certain modules introduced are free modules. This did not allow an immediate generalization to the case of more than two variables. The same obstacle arises in \textit{rank reduction} whether in \citep{key5} or in adapting the Moser-based rank reduction algorithm of \citep{key102,key41}. This limits our proposed formal reduction in Section \ref{moser} to $m=2$. However a generalization of Proposition \ref{gerardtransformation} is Theorem 2.3 in \citep{key53}. This furnishes the generalization of Theorem \ref{exponentialpfaff} to a general multivariate system. Furthermore, it motivates the investigation of their formal reduction since the leading coefficient matrix of a change of basis which takes the system to a weak-triangular form, would be characterized.

\bibliography{mybib}
\end{document}